\newcommand*{\id}{{\mathrm{id}}}
\newcommand*{\Kb}{{\mathbb K}}
\newcommand*{\CC}{{\mathbb{C}}}
\newcommand*{\un}{{\mathbf 1}}
\def\shuff#1#2{\mathbin{
      \hbox{\vbox{\hbox{\vrule \hskip#2 \vrule height#1 width 0pt}\hrule}\vbox{\hbox{\vrule \hskip#2 \vrule height#1 width 0pt\vrule }\hrule}}}}
\def\shuffl{{\mathchoice{\shuff{5pt}{3.5pt}}{\shuff{5pt}{3.5pt}}{\shuff{3pt}{2.6pt}}{\shuff{3pt}{2.6pt}}}}
\def\shuffle{{\, \shuffl \,}}
\newtheorem{thm}{Theorem}
\newtheorem{exam}{Example}
\newtheorem{cor}[thm]{Corollary}
\newtheorem{lem}[thm]{Lemma}
\newtheorem{prop}[thm]{Proposition}
\newtheorem{defn}{Definition}
\newtheorem{rmk}[thm]{Remark}
\begin{document}


\title[c-free cumulants and shuffle algebra]{A group-theoretical approach\\ to conditionally free cumulants}


\author[K.~Ebrahimi-Fard]{Kurusch Ebrahimi-Fard}
\address{Department of Mathematical Sciences, 
		Norwegian University of Science and Technology (NTNU),
		NO-7491 Trondheim, Norway.}
         \email{kurusch.ebrahimi-fard@ntnu.no}         
         \urladdr{https://folk.ntnu.no/kurusche/}

\author[F.~Patras]{Fr\'ed\'eric Patras}
\address{Univ.~de Nice,
Labo.~J.-A.~Dieudonn\'e,
         		UMR 7351, CNRS,
         		Parc Valrose,
         		06108 Nice Cedex 02, France.}
\email{patras@math.unice.fr}
\urladdr{www-math.unice.fr/$\sim$patras}


\begin{abstract}
In this work we extend the recently introduced group-theoretical approach to moment-cumulant relations in non-commutative probability theory to the notion of conditionally free cumulants. This approach is based on a particular combinatorial Hopf algebra which may be characterised as a non-cocommutative generalisation of the classical unshuffle Hopf algebra. Central to our work is the resulting non-commutative shuffle algebra structure on the graded dual. It implies an extension of the classical relation between the group of Hopf algebra characters and its Lie algebra of infinitesimal characters and, among others, the appearance of new forms of ``adjoint actions'' of the group on its Lie algebra which happens to play a key role in the new algebraic understanding of conditionally free cumulants.
\end{abstract}


\maketitle


\noindent {\footnotesize{\bf Keywords}: free probability; moment-cumulant relations; c-free cumulants; combinatorial Hopf algebra; shuffle algebra; pre-Lie algebra.}

\noindent {\footnotesize{\bf MSC Classification}: 16T05; 16T10; 16T30; 46L53; 46L54.}


\tableofcontents


\section{Introduction}
\label{sec:intro}

Voiculescu \cite{Voiculescu92,Voiculescu95} introduced in the 1980s the theory of free probability. It is based on the notion of free independence, or freeness, that is, the absence of algebraic relations. Free cumulants encode the notion of free independence. Speicher \cite{Speicher97c} uncovered a combinatorial approach to free cumulants based on the lattice of non-crossing set partitions and its M\"obius calculus. The reader is referred to \cite{MingoSpeicher17,NicaSpeicher06,Speicher17} for introductions and reviews. The free moment-cumulant relation for the $n$-th univariate moment $m_n$ is given by 
\begin{equation}
\label{freecumulants}
	m_n = \sum_{\pi \in NC_n} \prod_{\pi_i \in \pi} k_{|\pi_i|},
\end{equation}
where $NC_n$ is the lattice of non-crossing set partitions $\pi:=\{\pi_1,\ldots,\pi_j\}$ of  $[n]:=\{1,\dots,n\}$ and $|\pi_i|$ denotes the number of elements in the block $\pi_i \in \pi$. Here $k_l$ denotes the $l$-th free cumulant. Analogous statements hold for monotone \cite{HasebeSaigo11} and boolean cumulants \cite{Speicher97b}. Indeed, for monotone cumulants $h_l$ one has the monotone moment-cumulant relation
\begin{equation}
\label{monotonecumulants}
	m_n = \sum_{\pi \in NC_n}\frac{1}{\tau(\pi)!} \prod_{\pi_i \in \pi} h_{|\pi_i|}.
\end{equation}
The so-called tree (forest) factorial $\tau(\pi)!$ corresponds to the forest $\tau(\pi)$ of rooted trees associated to the nesting of the blocks of the non-crossing partition $\pi \in NC_n$. See \cite{Arizmendi15} for details. Boolean cumulants $r_l$ satisfy the boolean moment-cumulant relation
\begin{equation}
\label{booleancumulants}
	m_n=\sum_{I \in B_n} \prod_{l_i \in I} r_{|l_i|},
\end{equation}
where $B_n$ is the boolean lattice of interval partitions. The multivariate generalisations of these moment-cumulant relations will be given further below in the shuffle algebra setting. Relations between the different cumulants have been studied in great detail by Arizmendi et al.~in the recent article \cite{Arizmendi15}. 

\smallskip

The framework for our group-theoretical approach to free, boolean and monotone moment-cumulant relations has been developed in a series of recent works \cite{EFP15,EFP16,EFP16a,EFP17,EFP18}. It is based on a particular graded, connected, non-commutative, non-cocommutative word Hopf algebra $H$ defined on the double tensor algebra over a non-commutative probability space $(A,\varphi)$ with linear unital map $\varphi \colon A \to \mathbb{K}$. One may characterise $H$ as a non-cocommutative generalisation of the classical cocommutative unshuffle Hopf algebra \cite{Reutenauer93}. 

In \cite{EFP15} we first defined the coproduct of $H$. It is right-sided (right-handed in Turaev's original terminology) \cite{LodRonc,Menous18,Turaev} and splits into left and right half-coproducts, which define the structure of (un)shuffle bialgebra on $H$ \cite{Foissy07}. This implies, on the other hand, a splitting of the associative convolution product on the graded dual $H^\ast$ into two non-associative half-shuffles, which define a non-commutative shuffle algebra (aka dendriform algebra) structure on $H^*$. 
It follows from the existence of these structures that, besides the classical exponential $\exp^*$, two other exponential-type maps, denoted $\mathcal{E}_\prec$ and $\mathcal{E}_\succ$, can be defined in terms of the two half-shuffles. This allows for a refinement of the classical correspondence between a group and its Lie algebra, by means of the exponential map since \it all \rm three maps define bijections from the Lie algebra $g \subset H^*$ of infinitesimal characters to the group $G \subset H^*$ of algebra characters on $H$. Once a particular character $\Phi \in G$ has been defined in terms of the natural extension of $\varphi$ from $A$ to $H$, monotone, free, and boolean cumulants can be considered as infinitesimal characters $\rho$, $\kappa$, $\beta$ in $g$, respectively, and are defined in terms of the identities
\begin{equation}
\label{key1}
	\Phi = \exp^*\!(\rho) = \mathcal{E}_\prec(\kappa) = \mathcal{E}_\succ(\beta).
\end{equation}
In \cite{EFP18} we showed that, indeed, the left and right half-shuffle exponentials, $\mathcal{E}_\prec(\kappa)$ respectively $\mathcal{E}_\succ(\beta)$, give rise to free respectively boolean multivariate moment-cumulant relations \cite{Speicher97b,Speicher97c}. The shuffle exponential $\exp^*\!(\rho)$ describes instead the monotone moment-cumulant relations \cite{HasebeSaigo11}. This yields a novel and unifying approach to moment-cumulant relations in non-commutative probability. In \cite{EFP18} we also showed how three logarithm-type maps ($R$-transformations) corresponding to these exponentials together with a particular shuffle group-theoretical  adjoint operation permit to recover relations between cumulants, which were described explicitly in \cite{Arizmendi15} using classical M\"obius calculus on non-crossing set partitions. Regarding the latter an important remark is in order. In our group-theoretical approach non-crossing partitions enter the picture only through the closed formulas for the evaluation of the exponential-type maps $\exp^*$, $\mathcal{E}_\prec$ and $\mathcal{E}_\succ$ on words from $H$. For instance, the righthand side of the univariate free moment-cumulant relation \eqref{freecumulants} results from calculating $\mathcal{E}_\prec(\kappa)(w)$ for a word in $w=a^{\otimes n} \in H$ of length $n$ in a single letter, i.e., random variable $a \in A$.        

There is a well-known connection between moment-cumulant relations in classical probability and relations between Green's respectively connected Green's functions (e.g., in perturbative quantum field theory (QFT)). It extends to free moment-cumulant relations and Green's respectively connected Green's functions relations in planar QFT (see, e.g., \cite{EFP16a}). Still in perturbative QFT, replacing the vacuum by other, non-trivial, ground states gives rise to interesting phenomena meaningful to applications, see, e.g.~\cite{frab,bp,gott}. The notion of conditionally free probability, introduced in \cite{Bozejko96} by Bo\.{z}ejko et al., shares with pQFT over non-trivial vacua a key feature (the analogy stops there at the moment but deserves to be further analysed): the idea to consider a theory of free probabilities \it relative \rm to a given arbitrary state or, equivalently, to consider the behaviour of a pair of states (where, however, the two states have different roles). This conditional extension of Voiculescu's theory allows, among others, for the definition of a conditionally (or c-)free convolution product and $R$-transform, for the explicit calculation of distributions of conditionally free Gaussian and free Poisson distributions and other similar key behavioural properties one expects for a generalised free probability theory. 

The paper on hand shows how the combinatorial side of c-free cumulants is naturally captured by the group-theoretical picture sketched above. In this respect the aforementioned shuffle group-theoretical adjoint action, which permits to express monotone, free, and boolean cumulants in terms of each other, is the central object. Indeed, we will show how it allows to relate c-free cumulants with free and boolean, and therefore also with monotone cumulants.  

Finally, we remark that several works have appeared in recent years, applying Hopf algebra techniques in the context of free probability \cite{FriedrichMckay13, FriedrichMckay15,ManzelSchuermann17,MastnakNica10}. Moreover, non-commutative shuffle algebras appeared in the work by Belinschi et al.~\cite{BBLS11} in relation to the problem of the infinite divisibility of the normal distribution with respect to additive convolution in free probability. However, our approach is rather different, and potential connections have to be explored in the future.

\smallskip

The paper is organised as follows. In Section \ref{sect:condit} we survey briefly the foundations of the theory of c-free probabilities. In Section \ref{sec:shufflealgebra} we present the necessary background on non-commutative shuffle algebras together with the particular combinatorial Hopf algebra (denoted $H$ in the Introduction) as main example. This Hopf algebra will provide the underlying framework for our shuffle group-theoretical approach to moment-cumulant relations. Section \ref{sec:explog} introduces the three exponential bijections, which provide the group-theoretical setting for free, boolean and monotone moment-cumulant relations. The next section recalls the shuffle algebra approach to  the latter. Section \ref{sec:condfree} contain the main result of the paper. It describes conditionally free cumulants and convolution using the group-theoretical machine introduced in Sections \ref{sec:explog} and \ref{sec:cumulants}.

\medskip

\noindent\textbf{Acknowledgements:} We would like to thank the organisers of the CARMA 2017 workshop at CIRM in Luminy and the CNRS PICS project: Alg\`ebres de Hopf combinatoires et probabilit\'es non commutatives for its support.


\section{Conditional freeness}
\label{sect:condit}

We first fix some notation. Let $NC_n$ denote the lattice of non-crossing set partitions of order $n$ and $B_n$ is the boolean lattice of interval partitions of order $n$. See \cite{NicaSpeicher06} for details. Recall that a partition $\pi=\pi_1 \sqcup \dots \sqcup \pi_k$ of $[n]$ is non-crossing if and only if there are no $(i,l)\in \pi_a \times \pi_a$ and $(j,m)\in \pi_b \times \pi_b$, $1\leq a,b\leq k$ with $a\not= b$ and $i<j<l<m$. A partition is boolean if each of its blocks $\pi_i$ is an interval, i.e., if $a,b\in \pi_i$ and $a<c<b$, then $c\in\pi_i$. We let $|\pi_i|$ denote the number of elements in the block $\pi_i \in \pi$. The elements in each block $\pi_i \in \pi$ are naturally ordered, $\pi_i:=\{j_1 < \cdots < j_{|\pi_i|}\}$, and we set $a_{\pi_i}:=a_{j_1} \cdots a_{j_{|\pi_i|}}$. 
A
Conditionally free probability generalises the fundamental notion of Voiculescu's free probability theory in the context of two states. We introduce here briefly the main constructions relevant for our later purposes and refer the reader to the original article \cite{Bozejko96} for details. 

We work in the framework of unital algebras $A$ and a state on $A$ simply means a unital linear form $\varphi:{A} \to \CC$ (unital meaning that $\varphi(1)=1$). Given now unital algebras ${A}_i,\ i\in I$, each equipped with a pair of states $(\varphi_i,\psi_i)$, their free product reads
$$
	\ast_{i\in I}{A}_i\cong \CC\cdot 1\oplus\bigoplus\limits_{n=1}^\infty \bigoplus\limits_{i(1)\not
	= \dots\not= i(n)}{A}_{i(1)}^+\otimes\dots\otimes {A}_{i(n)}^+,
$$
where ${A}_j^+:= \ker \psi_j$. A new state $\varphi$ is then defined on $\ast_{i\in I}{A}_i$ by requiring $\varphi(1)=1$ and 
$$
	\varphi(a_1\cdots a_n)=\varphi_{i(1)}(a_1)\cdots \varphi_{i(n)}(a_n),
$$
when $a_j\in {A}_{i(j)}^+$ and $i(1)\not=\dots \not= i(n).$ Notice that the role of the $\varphi_i$ and that of the $\psi_i$ are not symmetrical.

Suppose that ${A}=\CC \langle X \rangle$. The (free) moments of the states $\varphi$ and $\psi$ on ${A}$ are defined by
$$
	m_n^\varphi:=\varphi(X^n),\quad m_n^\psi:=\psi(X^n).
$$
Speicher's free moment-cumulant relations (which define implicitly the free cumulants $k_n^\psi$) are then given by
$$
	m_n^\psi=\sum\limits_{p=1}^n\mathop{\sum\limits_{l(1),\dots,l(p)\geq 0  }}_{l(1)+\dots +l(p)=n-p}
	k_p^\psi m_{l(1)}^\psi\cdots m_{l(p)}^\psi
$$ 
or, in terms of non-crossing partitions:
$$
	m_n^\psi = \sum\limits_{\pi\in NC_n}\prod\limits_{\pi_l\in \pi}k_{|\pi_l|}^\psi.
$$
Conditionally (c-)free cumulants $R_k^{(\varphi,\psi)}$ for the pair $(\varphi,\psi)$ are defined instead by:
$$
	m_n^\varphi=\sum\limits_{j=1}^n\mathop{\sum\limits_{l(1),\ldots,l(j)\geq 0  }}_{l(1)+\cdots +l(j)=n-j}
	R_j^{(\varphi,\psi)} m_{l(1)}^\psi \cdots m_{l(j-1)}^\psi m_{l(j)}^\varphi,
$$
or, in terms of non-crossing partitions:
\begin{equation}\label{cfreecum}
	m_n^\varphi =\sum_{\pi \in NC_n}\prod_{\pi_l \in \pi \atop \pi_l\, \mathrm{inner}}
	k_{|\pi_l|}^\psi\prod_{\pi_l \in \pi \atop \pi_l\, \mathrm{outer}}R_{|\pi_l|}^{(\varphi,\psi)}.
\end{equation}
Here, the terms ``outer'' and ``inner'' refer to the structure of non-crossing partitions. A block $\pi_i$ of $\pi \in NC_n$ is inner if there exists a $\pi_j$ and $a,b\in \pi_j$ such that $a<c<b$ for all $c\in \pi_i$. A block which is not inner is outer.

As in classical free probability, cumulants characterise free convolution in the sense that the distribution of the c-free convolution 
$$
	(\varphi,\psi)=(\varphi_1,\psi_1) \boxplus (\varphi_2,\psi_2)
$$
of two pairs of states on $\CC  \langle X_1 \rangle$ respectively $\CC  \langle X_2 \rangle$ is characterized by
\begin{align}
	k_n^\psi 		&= k_n^{\psi_1} + k_n^{\psi_2}, 			\label{conv1}\\
	R_n^{(\varphi,\psi)}  &= R_n^{(\varphi_1,\psi_1)} + R_n^{(\varphi_2,\psi_2)}.		\label{conv2}
\end{align}


\section{Shuffle algebra}
\label{sec:shufflealgebra}

Card shufflings appeared already in Poincar\'e's treatise on probability. Later, so-called perfect shuffles led to the definition of commutative shuffle products. They were axiomatised independently by Eilenberg and MacLane in 1953 \cite{EM53} and  Sch\"utzenberger in 1958 \cite{Schutzenberger58}, in relation to the homology of commutative algebras respectively combinatorics and classical Lie algebra theory. Eilenberg and MacLane studied also non-commutative shuffle products using the idea of splitting such products into left and right half-shuffle products. The resulting algebraic shuffle relations for those half-shuffles allowed them to demonstrate abstractly the associativity of shuffle products familiar in topology. The commutative shuffle product is essential in many fields of pure and applied mathematics. In Chen's work for example \cite{Chen57,Chen71}, it encodes algebraically the product of iterated integrals. The splitting into half-shuffles reflects the integration by parts rule for Riemann integrals of ordinary functions. Reutenauer's classic monograph \cite{Reutenauer93} embedded Chen's fundamental work into  the setting of connected graded cocommutative Hopf algebras. Together with its origins in topology, combinatorics and Lie theory, these phenomena explain the ubiquity of shuffles. From 2001 onwards non-commutative shuffle products were axiomatised and explored systematically by Loday, Ronco, Chapoton, and others \cite{Loday01}, who called them dendriform\footnote{We prefer, for historical and conceptual reasons, the classical name shuffle.} products. These works paved the way to many theoretical developments and the discovery of new structures. Indeed, anti-symmetrising non-commutative shuffle products yields Lie brackets. However, for the half-shuffle products the picture is more subtle. Indeed, they combine to Lie admissible pre-Lie products \cite{Burde06,Cartier11,ChapLiv01,Manchon11}, which have been discovered independently in geometry, algebraic deformation theory and control theory~\cite{AG78,AG81,Gerstenhaber63,Vinberg63}. Regarding the following definitions and statements we refer the reader, for instance, to Manchon's survey \cite{Manchon11}.

\smallskip

\begin{defn}
A shuffle algebra $(D,\prec,\succ)$ consists of a $\mathbb{K}$-vector space $D$ together with products $\prec$ and $\succ$ called respectively the left and right half-shuffle products, satisfying the shuffle relations
\allowdisplaybreaks
\begin{eqnarray}
	(a \prec b) \prec c   &=& a \prec (b \succ c + b\prec c)        	\label{A1}\\
  	(a \succ b) \prec c   &=& a \succ (b \prec c)   				\label{A2}\\
   	a \succ (b \succ c)  &=& (a \succ b + a \prec b) \succ c        	\label{A3},
\end{eqnarray}
for $a,b,c \in D$. A commutative shuffle algebra is defined by including the extra relation
\begin{equation}
\label{comshuf}
	a \succ b - b \prec a = 0.
\end{equation}
\end{defn}
We will use from now on the terminology ``non-commutative shuffle algebra'' to emphasize explicitely non-commutativity, i.e., the absence in a given shuffle algebra of relation \eqref{comshuf}.
\begin{prop}
Let $(D,\prec,\succ)$ be a shuffle algebra. The shuffle product $m_{*}: D \otimes D \to D$, $m_{*}(a \otimes b)=:a * b$, defined in terms of the two half-shuffles
\begin{equation}
\label{dendassoc}
	a * b:= a \succ b + a \prec b,     
\end{equation}
for $a,b \in D$, is associative. In a commutative shuffle algebra the product $m_*$ is commutative. 
\end{prop}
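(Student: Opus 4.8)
The plan is to prove associativity by a direct expansion of both $(a*b)*c$ and $a*(b*c)$ using the definition \eqref{dendassoc} together with the three shuffle relations \eqref{A1}--\eqref{A3}, and then to check that the two resulting expressions agree term by term. Throughout I assume, as is implicit in the definition of a shuffle algebra, that $\prec$ and $\succ$ are bilinear, so that all products distribute over sums.

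First I would expand the left-hand side. Writing $a*b = a\succ b + a\prec b$ and then splitting $(a*b)*c = (a*b)\succ c + (a*b)\prec c$, the right-half-shuffle term collapses by \eqref{A3} to
\[
	(a\succ b + a\prec b)\succ c = a\succ(b\succ c),
\]
while the left-half-shuffle term splits, by bilinearity, into $(a\succ b)\prec c$ and $(a\prec b)\prec c$; applying \eqref{A2} to the former and \eqref{A1} to the latter yields $a\succ(b\prec c)$ and $a\prec(b\succ c + b\prec c)$ respectively. Hence
\[
	(a*b)*c = a\succ(b\succ c) + a\succ(b\prec c) + a\prec(b\succ c + b\prec c).
\]

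Next I would expand the right-hand side, which requires only bilinearity: from $a*(b*c) = a\succ(b*c) + a\prec(b*c)$ together with $b*c = b\succ c + b\prec c$ one obtains directly
\[
	a*(b*c) = a\succ(b\succ c) + a\succ(b\prec c) + a\prec(b\succ c + b\prec c).
\]
Comparing the two displays shows that they coincide term for term, establishing associativity. For the commutative case, relation \eqref{comshuf} gives $a\succ b = b\prec a$ and, symmetrically (exchanging the roles of $a$ and $b$), $a\prec b = b\succ a$; substituting these into $a*b = a\succ b + a\prec b$ immediately produces $b\prec a + b\succ a = b*a$.

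The argument is essentially a bookkeeping exercise, so I do not anticipate a genuine obstacle; the only point demanding care is matching each of the three half-shuffle terms on the left-hand side to the correct axiom---observing that the single relation \eqref{A3} handles the entire $\succ$-part of $(a*b)\succ c$, whereas the $\prec$-part $(a*b)\prec c$ must be broken up so that \eqref{A2} and \eqref{A1} can be applied separately to its two summands. Getting this pairing right is exactly what makes the two expansions line up.
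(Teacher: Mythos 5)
Your proof is correct: the expansion of $(a*b)*c$ via \eqref{A3}, \eqref{A2}, \eqref{A1} and the matching with $a*(b*c)$, as well as the derivation of commutativity from \eqref{comshuf}, are exactly the standard verification. The paper itself omits the proof (deferring to Manchon's survey), and your argument is precisely the one intended.
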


\begin{defn}
A left pre-Lie algebra $(P,\rhd)$ consists of a $\mathbb{K}$-vector space $P$ with a binary product $\rhd \colon P \otimes P \to P$ satisfying the left pre-Lie identity 
\begin{equation}
\label{leftpreLie}
	(a \rhd b) \rhd c - a \rhd (b \rhd c) = (b \rhd a) \rhd c - b \rhd (a \rhd c),     
\end{equation}
for $a,b,c \in P$. An analogous notion of right pre-Lie algebra exists.
\end{defn}

\begin{prop}
Let $(P,\rhd)$ be a left pre-Lie algebra. For $a,b \in P$ the commutator bracket $[a,b]:= a \rhd b - b \rhd a$ defines a Lie algebra on $P$. 
\end{prop}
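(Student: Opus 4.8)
The plan is to verify the two defining properties of a Lie algebra for the bracket $[a,b]:= a \rhd b - b \rhd a$, namely antisymmetry and the Jacobi identity. Antisymmetry is immediate from the definition: $[b,a] = b \rhd a - a \rhd b = -[a,b]$, with $\mathbb{K}$ of characteristic zero (or simply working formally) posing no issue. The real content is the Jacobi identity, and the key observation is that the left pre-Lie identity \eqref{leftpreLie} is precisely a statement about the associator. Introducing the notation $(a,b,c) := (a \rhd b) \rhd c - a \rhd (b \rhd c)$ for the associator of $\rhd$, the identity \eqref{leftpreLie} says exactly that the associator is symmetric in its first two arguments: $(a,b,c) = (b,a,c)$.

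First I would expand the Jacobi expression $[[a,b],c] + [[b,c],a] + [[c,a],b]$ by repeatedly applying the definition of the bracket, producing a signed sum of twelve terms of the form $(x \rhd y) \rhd z$ and $x \rhd (y \rhd z)$. Then I would collect these into associators, grouping each $(x \rhd y)\rhd z$ with the corresponding $x \rhd (y \rhd z)$. The natural way to organise this is to show the Jacobi sum equals a combination of associators that cancels in pairs once the symmetry $(x,y,z) = (y,x,z)$ is invoked. Concretely, one expects the total to reduce to something like $(a,b,c) - (b,a,c) + (b,c,a) - (c,b,a) + (c,a,b) - (a,c,b)$, and each of these three differences vanishes by \eqref{leftpreLie}.

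The main obstacle, such as it is, is purely bookkeeping: one must track signs carefully across the six bracket expansions and make sure every term is matched correctly before the pre-Lie identity is applied. There is no conceptual difficulty, since the left pre-Lie identity has been engineered precisely so that the associator is left-symmetric, which is the exact condition needed for the antisymmetrisation to satisfy Jacobi. I would therefore present the argument by first isolating the symmetry-of-associator reformulation of \eqref{leftpreLie}, then performing the expansion in a way that makes the cancellation transparent, so that the verification becomes a one-line consequence rather than an unmotivated computation.

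One clean alternative worth mentioning is to define left multiplication operators $L_a(b) := a \rhd b$ and rewrite \eqref{leftpreLie} as $L_{[a,b]} = [L_a, L_b]$, where the right-hand bracket is the commutator of operators on $P$; this says $a \mapsto L_a$ is a morphism to the Lie algebra of endomorphisms. Since the commutator of operators automatically satisfies Jacobi, and since $L$ applied to the bracket $[a,b]$ recovers a consistent structure, this furnishes a more structural route. However, for self-containedness I would favour the direct associator computation, reserving the operator reformulation as a remark.
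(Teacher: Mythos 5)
Your proof is correct, and in fact the paper offers no proof of this proposition at all --- it is stated as a classical fact with a pointer to Manchon's survey --- so your associator computation is precisely the standard argument the paper implicitly relies on: the twelve terms of the Jacobi sum collect into $(a,b,c)-(b,a,c)+(b,c,a)-(c,b,a)+(c,a,b)-(a,c,b)$, and each difference vanishes by the left-symmetry of the associator, which is exactly \eqref{leftpreLie}. One caution on your closing remark: the identity $L_{[a,b]}=[L_a,L_b]$ shows that $a\mapsto L_a$ is compatible with the brackets, but since $L$ need not be injective this does not by itself transport the Jacobi identity from $\mathrm{End}(P)$ back to $P$, so keeping the direct computation as the actual proof and the operator reformulation as a remark, as you propose, is the right call.
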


\begin{prop}
Let $(D,\prec,\succ)$ be a shuffle algebra. For $a,b \in D$ the product 
\begin{equation}
\label{denpreLie} 
	a \rhd b:= a \succ b - b \prec a  
\end{equation}
defines a left pre-Lie algebra on $D$. 
\end{prop}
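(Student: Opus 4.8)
The plan is to verify the left pre-Lie identity \eqref{leftpreLie} by direct computation from the definition \eqref{denpreLie}, organising the work so that the three shuffle relations \eqref{A1}--\eqref{A3} do all the cancelling. Writing the associator
\[
	A(a,b,c) := (a \rhd b) \rhd c - a \rhd (b \rhd c),
\]
the identity \eqref{leftpreLie} is precisely the assertion that $A$ is symmetric in its first two arguments, i.e.\ that $A(a,b,c) = A(b,a,c)$. So I would first expand $A(a,b,c)$ by substituting $x \rhd y = x \succ y - y \prec x$ twice. Each of the two composite products splits into four summands, so $A(a,b,c)$ becomes a signed sum of eight terms built from doubly-nested half-shuffles of $a,b,c$, for example $(a\succ b)\succ c$, $c\prec(b\prec a)$, $a\succ(c\prec b)$, and so on.

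My second step is to normalise these eight terms using \eqref{A1}--\eqref{A3}, sorting them into a ``$\succ$-headed'' family and a ``$\prec$-headed'' family. The terms in which $c$ is nested inside a $\succ$-product under an outer $\prec$, such as $(b \succ c)\prec a$, collapse under \eqref{A2} to $b \succ (c \prec a)$; together with the already-present term $a \succ (c \prec b)$ they form the manifestly $a\leftrightarrow b$ symmetric combination $a \succ (c \prec b) + b \succ (c \prec a)$, which therefore contributes identically to $A(a,b,c)$ and $A(b,a,c)$. Likewise the pair $(a\succ b)\succ c$ and $a\succ(b\succ c)$ combines through \eqref{A3} into $-(a\prec b)\succ c$, and together with the twin term $-(b \prec a)\succ c$ this is again symmetric under exchanging $a$ and $b$.

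The remaining $\prec$-headed terms, carrying $c$ on the outer left, are the delicate ones and are handled by \eqref{A1}. Concretely, the combination $-(c\prec a)\prec b + c\prec(a\prec b)$ reduces, upon expanding $(c\prec a)\prec b = c \prec (a\succ b) + c\prec(a\prec b)$, to the single surviving term $-c\prec(a\succ b)$; carrying the residual $-c\prec(b\succ a)$ along, the full $\prec$-headed contribution to $A(a,b,c)$ becomes $-c\prec(a\succ b)-c\prec(b\succ a)$. Performing the identical reduction inside $A(b,a,c)$ produces $-c\prec(b\succ a)-c\prec(a\succ b)$, which is the same expression. Collecting the three symmetric families then gives $A(a,b,c)=A(b,a,c)$, which is \eqref{leftpreLie}.

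I do not expect a genuine conceptual obstacle here: the statement is a formal consequence of the axioms, and the proof is a finite verification. The only real hazard is bookkeeping---keeping the eight nested terms and their signs straight, and applying each of \eqref{A1}--\eqref{A3} in the correct direction. As a consistency check I would separately confirm that the antisymmetrisation of $\rhd$ recovers the commutator of the associative product \eqref{dendassoc}, since $a\rhd b - b\rhd a = a * b - b * a$; this guarantees that the Lie bracket induced by the present proposition coincides with the one obtained from the preceding proposition, and it guards against sign errors in the expansion.
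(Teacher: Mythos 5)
Your verification is correct, and it is worth noting that the paper itself offers no proof of this proposition: it is stated as a known fact with a pointer to Manchon's survey \cite{Manchon11}, so a direct check like yours is exactly what is implicitly being invoked. Your strategy comes out right: expanding the associator $A(a,b,c)=(a\rhd b)\rhd c - a\rhd(b\rhd c)$ into eight nested half-shuffle terms and applying \eqref{A3}, \eqref{A2} and \eqref{A1} to the three pairs you indicate yields
\[
A(a,b,c) \;=\; -(a\prec b)\succ c-(b\prec a)\succ c
\;+\;a\succ(c\prec b)+b\succ(c\prec a)
\;-\;c\prec(a\succ b)-c\prec(b\succ a),
\]
which is manifestly symmetric in $a$ and $b$, as required. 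One small transcription slip: the two genuinely double-$\prec$ terms occurring in $A(a,b,c)$ are $+\,c\prec(b\prec a)$ (coming from $-c\prec(a\rhd b)$) and $-(c\prec b)\prec a$ (coming from $+(b\rhd c)\prec a$), not $-(c\prec a)\prec b + c\prec(a\prec b)$ as you write --- the pair you display is the one appearing in $A(b,a,c)$. Since \eqref{A1} collapses either pair to the corresponding term $-c\prec(\,\cdot\succ\cdot\,)$ and the final expression is symmetric anyway, this does not affect the validity of the argument, but the labels should be swapped for your derivation of $A(a,b,c)$ to read literally correctly. Your closing consistency check, $a\rhd b-b\rhd a=a*b-b*a$, is precisely the remark the paper records immediately after the proposition, so it is a sensible guard against sign errors.
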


\noindent Moreover, one verifies quickly that $[a,b] =a \rhd b - b \rhd a= a * b - b * a$ in $(D,\prec,\succ)$. Observe that in a commutative shuffle algebra the pre-Lie product \eqref{denpreLie} becomes trivial. We define the left and right multiplication maps, $L_{a \prec}(b):= a \prec b$, $L_{a \succ}(b):= a \succ b$ and $R_{\prec a}(b):= b \prec a$, $R_{\succ a}(b):= b\succ a$. Combining them, we can write $L_{a\rhd}(b):=a \rhd b=(L_{a \succ} - R_{\prec a})(b)$. The maps $L_{a \succ}$ and $R_{\prec b}$ commute thanks to relation \eqref{A2}. 

A consistent definition of unital shuffle algebra demands some caution -- due to the fact that it is a priori difficult to split the identity $\un * \un=\un$ via half-shuffles. This said, the augmentation of the shuffle algebra $(D,\prec,\succ)$ by a unit $\un$ to $\overline D := D \oplus \mathbb{K}.\un$ is defined by requiring for any $a \in D$, that $a * \un := \un * a := a$. Moreover, concerning the half-shuffles we define 
$$
	\un \succ a := a =: a \prec \un,
$$ 
and $\un \prec a := 0 =: a \succ \un$. This is further extended to include the pre-Lie product, i.e.,  $a \rhd \un := a =: \un \rhd a$. However, it is important to note that the separate cases of  $\un \prec \un$ and $\un \succ \un$ must be excluded as they can not be defined consistently. Two examples of (unital) shuffle algebras are given next. 

\begin{exam}\label{ex:shuffleAlg}{\rm{
The non-unital tensor algebra over a $\mathbb{K}$-vector space $A$ is defined by
$$
	T_+(A):=\bigoplus_{n > 0}A^{\otimes n}.
$$ 
Elements in $T_+(A)$ are denoted by words $w=a_{i_1} \cdots a_{i_m} \in A^{\otimes m}$. The number $|w|$ of letters of a word $w \in T_+(A)$ defines its length. The unital tensor algebra $T(A):=\bigoplus_{n \ge 0}A^{\otimes n}$ is defined by adding the empty word $\un$ in $A^{\otimes 0} \subset T(A)$. The commutative and associative shuffle product on words is defined iteratively on $T(A)$ by $w \shuffle \un = w = \un \shuffle w$ and
\begin{equation}
\label{shuffleproduct}
	a_{i_1} \cdots a_{i_m} \shuffle a_{j_1} \cdots a_{j_l} :=
	a_{i_1} (a_{i_2} \cdots a_{i_m}\shuffle a_{j_1} \cdots a_{j_l}) 
	+ a_{j_1} (a_{i_1} \cdots a_{i_m}\shuffle a_{j_2} \cdots a_{j_l}), 
\end{equation}
for any words $a_{i_1} \cdots a_{i_m},a_{j_1} \cdots a_{j_l} \in T_+(A)$. The two terms on the righthand side of  \eqref{shuffleproduct} define respectively the left and right half-shuffles satisfying \eqref{A1}-\eqref{A3} and \eqref{comshuf}. Note that there exists a natural  grading on $T(A)$ given by the length of words.}}
\end{exam}

Next we present an example of a non-commutative shuffle algebra. It provides the framework for our  approach to non-commutative probability and consists of the double tensor algebra over a $\mathbb{K}$-vector space $A$. Further below, the latter is supposed to be a unital $\mathbb{K}$-algebra, which together with the linear unital map $\varphi \colon A \to \mathbb{K}$ defines a non-commutative probability space.

\begin{exam}\cite{EFP15}\label{ex:HopfAlg}{\rm{
The non-unital double tensor algebra over a $\mathbb{K}$-vector space $A$ is defined   by
$$
	T_+(T_+(A)):=\bigoplus_{n > 0} T_+(A)^{\otimes n}.
$$ 
We use the bar-notation to denote elements $w_1 | \cdots | w_n \in T_+(T_+(A))$, where $w_i \in T_+(A)$, $i=1,\ldots,n$. The space $T_+(T_+(A))$ is equipped with the concatenation product, defined for $w= w_1 | \cdots | w_n$ and $w'=  w_1' | \cdots | w_m'$ in $T_+(T_+(A))$ by $w|w' := w_1 | \cdots | w_n | w_1' | \cdots | w_m'$. This non-commutative algebra is multigraded, that is, $T_+(T_+(A))_{n_1,\ldots ,n_k}:=T_{n_1}(A)\otimes \cdots \otimes T_{n_k}(A)$, as well as graded. The degree $n$ part is  $T_+(T_+(A))_n:=\bigoplus_{n_1+ \cdots +n_k=n}T(T(A))_{n_1,\ldots ,n_k}$. Similar observations hold for the unital case, that is, $T(T(A))=\oplus_{n \ge 0} T(A)^{\otimes n}$, and we will identify without further comments a bar symbol such as $w_1|1|w_2$ with $w_1|w_2$. The empty word, which is the unit for the bar-product, is denoted $\un \ \in T(T_+(A))_0$.

\medskip

Given two (canonically ordered) subsets $S \subseteq U$ of the set of integers $\mathbb{N}$, we call connected component of $S$ relative to $U$ a maximal sequence $s_1, \ldots , s_n$ in $S$, such that there are no $ 1 \leq i < n$ and $u \in U$, such that $s_i < u <s_{i+1}$. In particular, a connected component of $S$ in $\mathbb{N}$ is simply a maximal sequence of successive elements $s,s+1,\ldots ,s+n$ in $S$. Consider a word $a_1 \cdots a_n \in T_+(A)$. For the  (canonically ordered) non-empty set $S:=\{s_1,\ldots, s_p\} \subseteq [n]$, we define 
\begin{equation}
\label{aS}
	a_S:= a_{s_1} \cdots a_{s_p},
\end{equation}
and $a_\emptyset:=\un$. Denoting by $J_1,\ldots,J_k $ the connected components of $[n] - S$, we then set 
\begin{equation}
\label{aJ}
	a_{J^S_{[n]}}:= a_{J_1} | \cdots | a_{J_k}. 
\end{equation}
More generally, for $S \subseteq U \subseteq [n]$, set  $a_{J^S_U}:= a_{J_1} | \cdots | a_{J_k}$, where the $a_{J_i}$ are now the connected components of $U-S$ in $U$. We remark that the bar-notation in \eqref{aJ} respectively in $a_{J^S_U}$ may be interpreted as marking the places where sequences of consecutive letters have been extracted from a word.  Using \eqref{aS} and \eqref{aJ} we define a coproduct on $T(T_+(A))$. 

\begin{defn}\label{def:coproduct}
The coproduct $\Delta : T(A) \to T(A) \otimes  T(T_+(A))$ is defined by $\Delta(\un):= \un \otimes \un$ and
\begin{equation}
\label{HopfAlg}
	\Delta(a_1\cdots a_n) :=\sum_{S \subseteq [n]} a_S \otimes a_{J^S_{[n]}}.
\end{equation} 
It is extended multiplicatively to all of $T(T_+(A))$, i.e., $\Delta(w_1 | \cdots | w_m) := \Delta(w_1) \cdots \Delta(w_m).$ 
\end{defn}

For example, the coproduct of a single letter is $\Delta(a) = a \otimes \un + \un \otimes a$. For a word $a_1a_2$ of length two it is 
$$
	\Delta(a_1a_2) = a_1a_2 \otimes \un + \un \otimes a_1a_2 + a_1 \otimes a_2 + a_2 \otimes a_1.
$$
For the word $a_1a_2a_3 \in A^{\otimes 3}$ we calculate
\allowdisplaybreaks
\begin{align*}
	\Delta(a_1a_2a_3) &= a_1a_2a_3 \otimes \un + \un \otimes a_1a_2a_3 
	 + a_1 \otimes a_2a_3 + a_2 \otimes a_1|a_3 + a_3 \otimes a_1a_2 \\
	&\qquad
	+ a_1a_2 \otimes a_3 
	+ a_1a_3 \otimes a_2 
	+ a_2a_3 \otimes a_1. 
\end{align*}
The coproduct $\Delta(a_1 \cdots a_6)$ includes among others the sum 
$$
	a_1a_3a_5 \otimes a_2 | a_4 | a_6 
	+ a_3a_4a_5 \otimes a_1 a_2 | a_6 
	+ a_3a_6 \otimes a_1a_2 | a_4 a_5 
	+ a_4a_5a_6 \otimes a_1a_2 a_3 . 
$$

The proofs of the following two theorems appeared in \cite{EFP15}. See also \cite{EFP16}.
 
\begin{thm} \label{thm:HA} \cite{EFP15}
The graded algebra $H:=T(T_+(A))$ equipped with the coproduct \eqref{HopfAlg} is a connected graded non-commutative and non-cocommutative Hopf algebra. 
\end{thm}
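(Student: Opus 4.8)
The plan is to exploit that $H=T(T_+(A))$ is the \emph{free} associative unital algebra generated by the vector space $T_+(A)$, with bar-concatenation as product. Consequently any algebra morphism out of $H$ is freely and completely determined by its restriction to the generating space $T_+(A)$, and two algebra morphisms agreeing on $T_+(A)$ agree on all of $H$; this reduces every identity between algebra morphisms to a check on single words $a_1\cdots a_n\in T_+(A)$. The coproduct $\Delta$ of Definition~\ref{def:coproduct} is by construction such an algebra morphism $\Delta\colon H\to H\otimes H$ (recall $T(A)\subseteq T(T_+(A))$, so the right-hand side of \eqref{HopfAlg} lies in $H\otimes H$), and no relations need be verified for well-definedness. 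For the counit I would take the projection $\varepsilon\colon H\to\mathbb{K}$ onto the length-zero component $T(T_+(A))_0=\mathbb{K}.\un$, which is manifestly an algebra morphism. The grading is the one by total word-length; since each summand $a_S\otimes a_{J^S_{[n]}}$ in \eqref{HopfAlg} satisfies $|S|+|[n]\setminus S|=n$, the coproduct is graded, and $H_0=\mathbb{K}.\un$ shows that $H$ is connected.

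With these structural remarks, the substantive content is coassociativity together with the counit axiom. Both $(\Delta\otimes\id)\circ\Delta$ and $(\id\otimes\Delta)\circ\Delta$ are composites of algebra morphisms, hence algebra morphisms $H\to H^{\otimes 3}$; by freeness it suffices to compare them on a single word. On the left, expanding \eqref{HopfAlg} once more on the single word $a_S$ gives
\[
 (\Delta\otimes\id)\Delta(a_1\cdots a_n)=\sum_{T\subseteq S\subseteq[n]} a_T\otimes a_{J^T_S}\otimes a_{J^S_{[n]}}.
\]
On the right one must instead apply $\Delta$ to the \emph{bar-word} $a_{J^S_{[n]}}=a_{J_1}|\cdots|a_{J_k}$, where $J_1,\ldots,J_k$ are the connected components (maximal runs of consecutive integers) of $[n]\setminus S$; multiplicativity of $\Delta$ turns this into a product $\prod_i\Delta(a_{J_i})$ over the components.

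The hard part will be to read off the two tensor legs produced by this product and to recognise them intrinsically. The key is a nesting (transitivity) property of the connected-component construction \eqref{aJ}: writing $R=\bigsqcup_i T_i$ with $T_i\subseteq J_i$, the bar-word assembled from the second legs $a_{J^{T_i}_{J_i}}$ (components of $J_i\setminus T_i$ relative to $J_i$), read across all $i$ with the bars inherited at the $J_i$-boundaries, equals $a_{J^{S\sqcup R}_{[n]}}$ (components of $[n]\setminus(S\sqcup R)$ relative to $[n]$), while the bar-word assembled from the first legs $a_{T_i}$ equals $a_{J^{S}_{S\sqcup R}}$ (components of $R$ relative to $S\sqcup R$). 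One verifies this by observing that, since each $J_i$ is a maximal interval of $[n]\setminus S$, separation inside a single $J_i$ relative to $J_i$ coincides with separation relative to the ambient set, whereas the $S$-letters between distinct $J_i$'s reinstate exactly the boundary bars when one passes to $[n]$ (respectively to $S\sqcup R$). Granting this, the right-hand iterate becomes
\[
 (\id\otimes\Delta)\Delta(a_1\cdots a_n)=\sum_{\substack{S,R\subseteq[n]\\ S\cap R=\emptyset}} a_S\otimes a_{J^{S}_{S\sqcup R}}\otimes a_{J^{S\sqcup R}_{[n]}}
 =\sum_{S\subseteq U\subseteq[n]} a_S\otimes a_{J^{S}_{U}}\otimes a_{J^{U}_{[n]}},
\]
after setting $U:=S\sqcup R$. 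Relabelling the left-hand iterate by $T\mapsto S$, $S\mapsto U$ yields the identical triple sum, proving coassociativity. The counit axiom is immediate from the same expansion: applying $\varepsilon$ to the left (respectively right) leg of \eqref{HopfAlg} annihilates every term except $S=\emptyset$ (respectively $S=[n]$), returning $a_1\cdots a_n$ in either case.

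It then remains to collect the qualifiers. Being a connected graded bialgebra, $H$ automatically carries an antipode, obtained as the convolution inverse of the identity computed recursively along the grading, so $H$ is a Hopf algebra. Non-commutativity of the bar-product is evident, since $w|w'\neq w'|w$ in general (e.g.\ $a|aa\neq aa|a$ for any nonzero $a\in A$). Finally, non-cocommutativity follows by inspection of \eqref{HopfAlg}: every left leg $a_S$ is a single block, so a term such as $a_2\otimes(a_1|a_3)$ appearing in $\Delta(a_1a_2a_3)$ can never be matched by its flip $(a_1|a_3)\otimes a_2$, a two-block bar-word in the left slot, which $\Delta$ never produces.
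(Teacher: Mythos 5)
The paper does not prove this theorem: it states it with a citation and the remark that ``the proofs of the following two theorems appeared in \cite{EFP15}'', so there is no in-text argument to compare against. Your proof is correct and is essentially the standard one from that reference: freeness of $T(T_+(A))$ over $T_+(A)$ reduces everything to single words, and coassociativity comes down to the transitivity property of the relative connected-component construction, namely that extracting $T_i\subseteq J_i$ from each block of $a_{J^S_{[n]}}$ reassembles into $a_{J^{S}_{S\sqcup R}}\otimes a_{J^{S\sqcup R}_{[n]}}$ with $R=\bigsqcup_i T_i$ --- which you state and justify correctly (the maximal-interval property of the $J_i$ is exactly what makes separation relative to $J_i$ agree with separation relative to $[n]$, and empty $T_i$ are absorbed by the convention $w_1|1|w_2=w_1|w_2$). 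The remaining points (graded counit, connectedness forcing the antipode, non-commutativity of the bar product, and non-cocommutativity via a term like $a_2\otimes(a_1|a_3)$ whose flip cannot occur) are all handled correctly.
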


The central observation in \cite{EFP15} consist of the splitting of the coproduct \eqref{HopfAlg} into two parts
$$
	\Delta=\Delta_{\prec} + \Delta_{\succ}. 
$$
The corresponding left respectively right half-coproducts are defined on $H$ by
\begin{align}
	\Delta_{\prec}(a_1 \cdots a_n) 
				&:= \sum_{1 \in S \subseteq [n]} a_S \otimes a_{J^S_{[n]}}	
				=: \Delta^+_{\prec}(a_1 \cdots a_n) + a_1 \cdots a_n \otimes \un 	\label{lefthalfcoprod}
\end{align}
and
\begin{align}
	\Delta_{\succ}(a_1 \cdots a_n) 
			&:= \sum_{1 \notin S \subset [n]} a_S \otimes a_{J^S_{[n]}} 		
			=: \Delta^+_{\succ}(a_1 \cdots a_n) + \un \otimes a_1 \cdots a_n .	\label{righthalfcoprod}
\end{align}
Note that for $w \in H$ the reduced coproduct $\Delta^+(w):=\Delta(w) - w \otimes \un - \un \otimes w$ splits into
$$
	\Delta^+(w) = \Delta^+_\prec(w) + \Delta^+_\succ(w).
$$
For instance, the coproduct $\Delta(a_1a_2a_3)$ is the sum of the left half-coproduct 
$$
	\Delta_{\prec}(a_1a_2a_3) = a_1a_2a_3 \otimes \un 
	+ a_1 \otimes a_2a_3 + a_1a_2 \otimes a_3 + a_1a_3 \otimes a_2
$$
and right half-coproduct
$$
	\Delta_{\succ}(a_1a_2a_3) = \un \otimes a_1a_2a_3 
	+ a_2 \otimes a_1|a_3 + a_3 \otimes a_1a_2 + a_2a_3 \otimes a_1.
$$
The two half-coproducts are extended to $H$ by defining them on $w_1 | \cdots | w_m$
\begin{eqnarray*}
	\Delta_{\prec}(w_1 | \cdots | w_m) &:=& \Delta_{\prec}(w_1)\Delta(w_2) \cdots \Delta(w_m) \\
	\Delta_{\succ}(w_1 | \cdots | w_m) &:=& \Delta_{\succ}(w_1)\Delta(w_2) \cdots \Delta(w_m). 
\end{eqnarray*}

\begin{thm} \cite{EFP15} \label{thm:bialg} The algebra
$H$ equipped with $\Delta_{\prec}$ and $\Delta_{\succ}$ is a unital unshuffle bialgebra. 
\end{thm}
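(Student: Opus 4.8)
The plan is to verify directly that the two half-coproducts satisfy the co-shuffle (codendriform) relations dual to \eqref{A1}--\eqref{A3},
\begin{align*}
	(\Delta_\prec \otimes \id)\Delta_\prec &= (\id \otimes \Delta)\Delta_\prec, \\
	(\Delta_\succ \otimes \id)\Delta_\prec &= (\id \otimes \Delta_\prec)\Delta_\succ, \\
	(\Delta \otimes \id)\Delta_\succ &= (\id \otimes \Delta_\succ)\Delta_\succ,
\end{align*}
together with the compatibility of $\Delta_\prec,\Delta_\succ$ with the bar-product and with the unit conventions in the sense of \cite{Foissy07}. Since Theorem \ref{thm:HA} already furnishes that $(H,\Delta)$ is a connected graded bialgebra, the coassociativity of $\Delta=\Delta_\prec+\Delta_\succ$, counitality, and multiplicativity of $\Delta$ come for free. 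Thus the genuine content is the three identities displayed above, plus checking that the multiplicative extension of the half-coproducts to bar-words is consistent; this is where I would put the effort.

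First I would record the iterated coproduct in closed form. A bookkeeping of nested subsets gives, for a single word $a_1\cdots a_n\in T_+(A)$,
$$
	(\Delta \otimes \id)\Delta(a_1\cdots a_n) = (\id \otimes \Delta)\Delta(a_1\cdots a_n) = \sum_{T \subseteq S \subseteq [n]} a_T \otimes a_{J^T_S} \otimes a_{J^S_{[n]}},
$$
which is precisely the coassociativity identity underlying Theorem \ref{thm:HA}. The key observation is that each half-coproduct appearing in the three relations imposes exactly one membership condition on the distinguished letter indexed by $1$: a \emph{left} half-coproduct forces $1$ into its left tensor slot, a \emph{right} half-coproduct forces $1$ into its right tensor slot. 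Because $\Delta_\prec$ and $\Delta_\succ$ act through the \emph{first block} of a bar-word, whose smallest index is $\min([n]\setminus T)$, this is always a condition on whether $1$ belongs to the relevant member of the chain $T\subseteq S$.

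The heart of the proof is then to evaluate both sides of each relation as a truncation of this triple sum and to match the conditions on $1$. For the first relation, both $(\Delta_\prec\otimes\id)\Delta_\prec$ and $(\id\otimes\Delta)\Delta_\prec$ reduce to the sum over chains with $1\in T$ (hence $1\in S$). For the second, both sides reduce to $1\in S$ and $1\notin T$: on the left the outer $\Delta_\prec$ puts $1\in S$ while the inner $\Delta_\succ$, acting on $a_S$ whose first letter is $a_1$, removes it from $T$; on the right the outer $\Delta_\succ$ keeps $1\notin T$, while the inner $\Delta_\prec$ acts on the first block of $a_{J^T_{[n]}}$, whose smallest index is $1$ since $1\notin T$, forcing $1\in S$. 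For the third, both sides collapse to $1\notin S$. In each case the two truncations of the triple sum coincide, establishing the identity on single words.

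The main obstacle I anticipate is the passage from single words to general bar-words $w_1|\cdots|w_m$ together with the unit bookkeeping. Here I would use the multiplicative extensions $\Delta_\prec(w_1|\cdots|w_m)=\Delta_\prec(w_1)\Delta(w_2)\cdots\Delta(w_m)$ and $\Delta_\succ(w_1|\cdots|w_m)=\Delta_\succ(w_1)\Delta(w_2)\cdots\Delta(w_m)$, which are consistent with $\Delta=\Delta_\prec+\Delta_\succ$ precisely because $\Delta$ is multiplicative. The co-shuffle relations then propagate from the first block to the whole bar-word, since only that block is treated asymmetrically; this is exactly the ``right-sided'' (right-handed) feature of the coproduct. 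Finally the unit is handled through the prescribed conventions $\un\succ a=a=a\prec\un$ and $\un\prec a=0=a\succ\un$, bearing in mind that $\un\prec\un$ and $\un\succ\un$ are deliberately left undefined. Reading the three relations on the augmentation ideal via the reduced half-coproducts $\Delta^+_\prec,\Delta^+_\succ$, and combining them with these conventions, yields the consistent unital unshuffle bialgebra structure asserted.
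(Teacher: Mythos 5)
Your proposal is correct and follows essentially the same route as the proof the paper relies on (it defers to \cite{EFP15}, where the unshuffle relations are verified exactly by this subset bookkeeping): one checks the three codendriform identities dual to \eqref{A1}--\eqref{A3} by writing the iterated coproduct as a sum over chains $T\subseteq S\subseteq[n]$ and observing that each side imposes the same membership condition on the letter $a_1$, the only implicit point being that the term-by-term identification underlying coassociativity preserves the chain $(T,S)$, so that restricting it to the relevant conditions is legitimate. The extension to bar-words and the unit conventions are handled as you describe.
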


For details on the notion of unshuffle bialgebra we refer the reader to Foissy's article \cite{Foissy07} and, in the present context, to our previous articles, e.g., \cite{EFP15,EFP17}. Recall that the space of linear maps, $\mathrm{Lin}(H,\mathbb{K})$, is (as for all Hopf algebras) an associative and untial $\mathbb{K}$-algebra with respect to the non-commutative convolution product defined for $\Gamma,\Psi \in \mathrm{Lin}(H,\mathbb{K})$ in terms of the coproduct \eqref{HopfAlg} 
$$
	\Gamma*\Psi := m_\Kb (\Gamma \otimes \Psi) \Delta,
$$  
where $m_\Kb$ stands for the product map in $\mathbb{K}$. The augmentation map $e: H \rightarrow \mathbb{K}$, defined by $e(\un) :=1$ and zero on the so-called  augmentation ideal $H_+:=T_+(T_+(A))$, is the unit for this convolution product. In light of the splitting $\Delta^+ = \Delta^+_{\prec} + \Delta^+_{\succ}$ we define accordingly the left and right convolution half-products on $\mathrm{Lin}(H_+,\mathbb{K})$:
$$
	\Gamma \prec \Psi:=m_\mathbb{K}(\Gamma \otimes \Psi)\Delta^+_\prec \ 
	\qquad\
	\Gamma \succ \Psi:=m_\mathbb{K}(\Gamma \otimes \Psi)\Delta^+_\succ .
$$
These operations are extended by setting, for $\Psi\in \mathrm{Lin}(H_+,\mathbb{K})$, $e \prec \Psi:=0$, $e \succ \Psi:=\Psi$, $\Psi\prec e:=\Psi,\ \Psi\succ e:=0.$ As a result we obtain the next proposition.

\begin{prop} \cite{EFP15} \label{prop:shufflealgebra}
The space $\mathrm{Lin}(H,\mathbb{K})$ equipped with $(\prec, \succ)$ is a unital shuffle algebra.
\end{prop}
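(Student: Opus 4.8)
The plan is to deduce the shuffle relations \eqref{A1}--\eqref{A3} for $(\mathrm{Lin}(H,\Kb),\prec,\succ)$ by transposing the unshuffle coalgebra relations that the half-coproducts $\Delta_\prec,\Delta_\succ$ satisfy on $H$, which is exactly the content of Theorem \ref{thm:bialg}. The starting observation is that, under the evaluation pairing, iterating the half-convolution products on functionals amounts to iterating the half-coproducts on $H$. For instance, for $\Gamma,\Psi,\Xi \in \mathrm{Lin}(H_+,\Kb)$ and $w \in H_+$,
\[
	((\Gamma\prec\Psi)\prec\Xi)(w) = \big((\Gamma\otimes\Psi\otimes\Xi)\circ(\Delta^+_\prec\otimes\id)\Delta^+_\prec\big)(w),
\]
and every other parenthesisation occurring in \eqref{A1}--\eqref{A3} is treated the same way, so that each side of each shuffle relation is the transpose of a single linear map $H_+\to H_+^{\otimes 3}$ (note that $\Delta^+_\prec,\Delta^+_\succ,\Delta^+=\Delta^+_\prec+\Delta^+_\succ$ all map $H_+$ into $H_+\otimes H_+$).

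First I would write out these transposes explicitly. Expanding the definitions of $\prec$, $\succ$ and of $*=\prec+\succ$, one finds that \eqref{A1}, \eqref{A2}, \eqref{A3} hold for all functionals if and only if, respectively,
\begin{align}
	(\Delta^+_\prec\otimes\id)\Delta^+_\prec &= (\id\otimes\Delta^+)\Delta^+_\prec, \label{cosh1}\\
	(\Delta^+_\succ\otimes\id)\Delta^+_\prec &= (\id\otimes\Delta^+_\prec)\Delta^+_\succ, \label{cosh2}\\
	(\id\otimes\Delta^+_\succ)\Delta^+_\succ &= (\Delta^+\otimes\id)\Delta^+_\succ. \label{cosh3}
\end{align}
Here the passage from ``equality of functionals for all $\Gamma,\Psi,\Xi$'' to ``equality of the two maps $H_+\to H_+^{\otimes 3}$'' rests only on the elementary fact that decomposable functionals $\Gamma\otimes\Psi\otimes\Xi$ separate the points of $H_+^{\otimes 3}$.

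Second, I would identify \eqref{cosh1}--\eqref{cosh3} as precisely the defining coassociativity-type (unshuffle) relations of an unshuffle bialgebra in the sense of Foissy \cite{Foissy07}. Since Theorem \ref{thm:bialg} asserts that $(H,\Delta_\prec,\Delta_\succ)$ is a unital unshuffle bialgebra, relations \eqref{cosh1}--\eqref{cosh3} hold on $H$, and the three shuffle relations on $\mathrm{Lin}(H,\Kb)$ follow at once by the correspondence above. It remains to treat the unit: one checks that the conventions $e\prec\Psi:=0$, $e\succ\Psi:=\Psi$, $\Psi\prec e:=\Psi$, $\Psi\succ e:=0$ make $e$ a two-sided unit for $*$ and are consistent with \eqref{A1}--\eqref{A3} in all boundary cases, while the ill-defined combinations $e\prec e$ and $e\succ e$ are never needed. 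The crucial input is the placement of the two group-like terms in \eqref{lefthalfcoprod}--\eqref{righthalfcoprod}: the term $w\otimes\un$ belongs to $\Delta_\prec$ (since $1\in S=[n]$) and $\un\otimes w$ to $\Delta_\succ$ (since $1\notin S=\emptyset$), which is exactly what aligns the unital extension with the shuffle axioms.

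The main obstacle is not the algebraic core, which is handed to us by Theorem \ref{thm:bialg}, but the careful bookkeeping in passing between the reduced half-coproducts $\Delta^+_\prec,\Delta^+_\succ$ and their unital extensions. One must verify that the reduced identities \eqref{cosh1}--\eqref{cosh3}, once combined with the unit conventions for $e$ and the two group-like terms, still yield the full shuffle relations on all of $\mathrm{Lin}(H,\Kb)$ — including the cases where one or more of the arguments equals $e$ — and that the excluded products $e\prec e$, $e\succ e$ genuinely do not arise. Everything else is a routine, if slightly tedious, unwinding of the convolution definitions against the splitting $\Delta=\Delta_\prec+\Delta_\succ$.
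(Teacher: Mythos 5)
Your argument is correct and is essentially the paper's own route: the proposition is stated in the paper as an immediate consequence of Theorem \ref{thm:bialg} (citing \cite{EFP15}), where the proof is exactly this dualization of the unshuffle (codendriform) coalgebra axioms for $\Delta_\prec,\Delta_\succ$ into the shuffle relations \eqref{A1}--\eqref{A3} for the half-convolutions, followed by the same bookkeeping for the unit $e$ and the group-like terms. Your three transposed coproduct identities and the unit conventions match what is needed, so nothing is missing.
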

}}
\end{exam}


\section{Shuffle and half-shuffle exponentials and logarithms}
\label{sec:explog}
      
Recall the formal, i.e., purely algebraic component of the relations between a group and its Lie algebra as encoded in the Baker--Campbell--Hausdorff formula \cite{Reutenauer93}. The natural framework to understand these phenomena is provided by complete connected cocommutative Hopf algebras and, in particular, classical commutative shuffle Hopf algebras \cite{Reutenauer93}. In this case the exponential and logarithm maps relate the Lie algebra of primitive elements bijectively to the group of group-like elements. In \cite{EFP17} we started to explore how the classical correspondence between groups and Lie algebras, and related properties and identities, translate in the setting of the non-(co)commutative shuffle bialgebra $H$ in Theorem \ref{thm:HA}. It turns out that in this case one has to consider not only the usual exponential-logarithm correspondence but also two shuffle-type counterparts defined in terms of the two half-shuffle products. In this section we recall from \cite{EFP17} the shuffle and half-shuffle exponentials and logarithms and introduce the group-theoretical shuffle adjoint actions. 
      
\smallskip
      
A preliminary remark is in order regarding convergence issues. They are left aside in the present paper since we deal implicitly with formal series expansions over free shuffle algebras (insuring the convergence in the formal sense), or with graded algebras (in which case formal power series expansions restrict to finite expansions in each degree). In practice, ``let $D$ be a shuffle algebra'' means therefore till the end of the present section, ``let $D$ be a free or a graded connected (i.e.~with no degree zero component) shuffle algebra''.
      
\medskip 
   
Let $(\overline D,\succ,\prec)$ be a unital shuffle algebra. For any element $a \in D$ we define the usual exponential and logarithm in terms of the associative shuffle product \eqref{dendassoc}
\begin{equation}
\label{ExpLog}
	\exp^*\!(a):=\un + \sum_{n > 0} \frac{a^{* n}}{n!}  
	\qquad\ 
	\log^*(\un+a):=-\sum_{n>0}(-1)^n\frac{a^{*n}}{n}. 
\end{equation}
For $a \in D$ we define $a^{\succ{0}}:=\un=:a^{\prec{0}}$ and for $n>0$, $a^{\prec{n}}:=a \prec (a^{\prec{n-1}})$, and $a^{\succ{n}} := (a^{\succ{n-1}} )\succ a$. Then the left and right half-shuffle exponentials are defined for $a \in D$ 
$$
	\mathcal{E}_\prec(a) :=\un + \sum_{n > 0} a^{\prec{n}} 
	\qquad\
	\mathcal{E}_\succ(a) :=\un + \sum_{n > 0}  a^{\succ n}.
$$
They are respectively the formal solutions of the two half-shuffle fixed point equations 
\begin{equation}
\label{recursion}
	X=\un + a\prec X \ \qquad\ Y= \un + Y \succ a.
\end{equation}

\begin{lem}\label{lem:inverse}
Let $D$ be a shuffle algebra, and $\overline D=D \oplus \mathbb{K}.\un$ its unital augmentation. 

1) For $a \in D$, the product of $X:=\mathcal{E}_{\prec}(a)$ and $Y:=\mathcal{E}_{\succ}(-a)$ is $Y * X= X * Y = \un$, so that  $\mathcal{E}^{*-1}_\prec(a)=\mathcal{E}_{\succ}(-a)$. We have therefore 
\begin{equation}
\label{inverse1}
	Y=X^{*-1}=\sum\limits_{n\geq 0}(-1)^n{(\mathcal{E}_\prec (a)-\un)}^{* n}.
\end{equation}

2) For $a \in D$ and $X'=X'(a) :=\mathcal{E}_\prec (a)-\un$, we have
\begin{equation}
\label{leftLog1}
	a=X'\prec \big(\sum\limits_{n\geq 0}(-1)^n{X'}^{* n}\big).
\end{equation}
Analogously, for $Y'=Y'(a) :=\mathcal{E}_\succ (a)-\un$, we have
\begin{equation}
\label{leftLog2}
	a=\big(\sum\limits_{n\geq 0}(-1)^n{Y'}^{* n}\big) \succ Y'.
\end{equation}
\end{lem}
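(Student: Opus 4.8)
The plan is to avoid the explicit series altogether and work instead from the two half-shuffle fixed-point equations \eqref{recursion}, organising everything by the grading that counts the number of occurrences of $a$. This is legitimate because, per the standing convention of this section, we work with formal series in a free shuffle algebra, so the components of a given expression that are $n$-linear in $a$ are well defined. Writing $X := \mathcal{E}_\prec(a) = \sum_n X_n$ with $X_n = a^{\prec n}$ and $Y := \mathcal{E}_\succ(-a) = \sum_m Y_m$ with $Y_m = (-1)^m a^{\succ m}$, the two fixed-point equations translate into the degreewise recursions $X_n = a \prec X_{n-1}$ and $Y_m = -\,Y_{m-1}\succ a$ for $n,m \geq 1$, with $X_0 = Y_0 = \un$. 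The goal for part~1 is then to show $(X * Y)_N = 0$ for every $N \geq 1$.

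For part~1 the key computation is to expand $(X*Y)_N = \sum_{n+m=N}\big(X_n \prec Y_m + X_n \succ Y_m\big)$ and treat the two half-shuffle sums separately. In the $\prec$-sum I would substitute $X_n = a \prec X_{n-1}$ for $n \geq 1$ and apply \eqref{A1} in the form $(a \prec X_{n-1}) \prec Y_m = a \prec (X_{n-1} * Y_m)$, the boundary term $n=0$ dropping out because $\un \prec Y_N = 0$; this produces $a \prec (X*Y)_{N-1}$. Symmetrically, in the $\succ$-sum I would substitute $Y_m = -\,Y_{m-1}\succ a$ for $m \geq 1$ and apply \eqref{A3} in the form $X_n \succ (Y_{m-1}\succ a) = (X_n * Y_{m-1}) \succ a$, the boundary term $m=0$ dropping out because $X_N \succ \un = 0$; this produces $-(X*Y)_{N-1}\succ a$. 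Adding the two halves yields the self-referential recursion
\begin{equation*}
	(X*Y)_N = a \prec (X*Y)_{N-1} - (X*Y)_{N-1} \succ a \qquad (N \geq 1),
\end{equation*}
and since $(X*Y)_0 = \un$ gives $(X*Y)_1 = a \prec \un - \un \succ a = a - a = 0$, an induction finishes $(X*Y)_N = 0$ for all $N \geq 1$. The main obstacle, and the only delicate point, is exactly this pairing: because $\prec$ and $\succ$ are non-associative one cannot manipulate $X*Y$ directly, and one must recognise that the \emph{left} fixed-point equation is fed into the $\prec$-half via \eqref{A1} while the \emph{right} one is fed into the $\succ$-half via \eqref{A3}, with the unital conventions $\un \prec(\,\cdot\,)=0$ and $(\,\cdot\,)\succ\un=0$ annihilating the boundary contributions. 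Once $X * Y = \un$ is established, the fact that $X=\un+X'$ with $X'$ of positive degree is $*$-invertible (its two-sided inverse being the geometric series) forces $Y = X^{*-1}$ and hence $Y*X=\un$ as well, and this is precisely \eqref{inverse1} with $X^{*-1} = \sum_{n\geq 0}(-1)^n {X'}^{*n}$.

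Part~2 should then fall out quickly. For the left identity I would read off $X' = \mathcal{E}_\prec(a)-\un = a \prec X$ directly from \eqref{recursion}, note that $\sum_{n\geq 0}(-1)^n {X'}^{*n} = X^{*-1} = Y$ by part~1, and apply \eqref{A1} once: $X' \prec Y = (a \prec X)\prec Y = a \prec (X*Y) = a \prec \un = a$, which is \eqref{leftLog1}. The right identity \eqref{leftLog2} is handled symmetrically by invoking part~1 with $a$ replaced by $-a$, giving $\mathcal{E}_\succ(a)^{*-1} = \mathcal{E}_\prec(-a) = \sum_{n\geq 0}(-1)^n {Y'}^{*n}$ for $Y' = \mathcal{E}_\succ(a)-\un = \mathcal{E}_\succ(a)\succ a$, and then applying \eqref{A3} once: $\big(\sum_{n\geq 0}(-1)^n {Y'}^{*n}\big)\succ Y' = \mathcal{E}_\succ(a)^{*-1}\succ(\mathcal{E}_\succ(a)\succ a) = (\mathcal{E}_\succ(a)^{*-1}*\mathcal{E}_\succ(a))\succ a = \un \succ a = a$. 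Both reductions use only a single shuffle relation together with the inverse statement of part~1, so no further calculation is needed.
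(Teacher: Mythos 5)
Your argument is correct, but note that the paper does not actually prove Lemma~\ref{lem:inverse}: its ``proof'' is a pointer to \cite{EFP15,EFP16,EFP18}, so you have supplied a self-contained derivation where the authors defer to earlier work. Your route is the standard one in those references in spirit --- everything is driven by the two fixed-point equations \eqref{recursion} --- but your degreewise bookkeeping is a clean way to package it: expanding $(X*Y)_N$ and feeding $X_n=a\prec X_{n-1}$ into the $\prec$-half via \eqref{A1} and $Y_m=-Y_{m-1}\succ a$ into the $\succ$-half via \eqref{A3}, with the unital conventions $\un\prec(\cdot)=0$ and $(\cdot)\succ\un=0$ killing the boundary terms, correctly yields the recursion $(X*Y)_N=a\prec (X*Y)_{N-1}-(X*Y)_{N-1}\succ a$, whose vanishing for $N\geq 1$ follows by induction from $(X*Y)_1=a-a=0$. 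The passage from the one-sided identity $X*Y=\un$ to $Y=X^{*-1}$ via the two-sided invertibility of $\un+X'$ (geometric series) is also sound, and part~2 then reduces, as you say, to a single application of \eqref{A1} (resp.\ \eqref{A3}) together with $X*X^{*-1}=\un$ (resp.\ $\mathcal{E}_\succ(a)^{*-1}*\mathcal{E}_\succ(a)=\un$). Two small points worth tightening: the grading ``by occurrences of $a$'' is only literally a grading in the free shuffle algebra on one generator, so for a general graded connected $D$ you should either invoke the universal property to transport the identity, or simply treat $(X*Y)_N:=\sum_{n+m=N}X_n*Y_m$ as a formal regrouping of the (formally convergent) double sum, which is all your induction needs; and when you apply \eqref{A1} and \eqref{A3} with arguments containing a $\un$-component you are implicitly using the paper's extension conventions, which is legitimate but deserves a word.
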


\begin{proof}
For proofs and more details see, for instance, \cite{EFP15,EFP16,EFP18}. 
\end{proof}

\begin{defn}\label{def:left-right-log}
Let $D$ be a shuffle algebra, and $\overline D$ its unital augmentation. For $x \in D$ define the left half-shuffle logarithm
\begin{equation}
\label{left-log}
	  \mathcal{L}_{\prec}(\un + x) := x \prec \big(\sum\limits_{n \geq 0}(-1)^nx^{* n}\big),
\end{equation}
and the right half-shuffle logarithm
\begin{equation}
\label{right-log}
	  \mathcal{L}_{\succ}(\un + x) := \big(\sum\limits_{n \geq 0}(-1)^nx^{* n}\big) \succ x.
\end{equation}
\end{defn}

For the following theorem we define the pre-Lie Magnus expansion \cite{EFM09} in terms of the recursion
\begin{equation}
\label{preLieMagnus}
	\Omega'(a) := \frac{L_{\Omega' \rhd}}{\mathrm{e}^{L_{\Omega' \rhd}}-\id}(a)
            =\sum\limits_{m\ge 0} \frac{b_m}{m!}\ L^{(m)}_{\Omega' \rhd}(a)
            =a - \frac{1}{2} a \rhd a + \sum\limits_{m\ge 2} \frac{b_m}{m!}\ L^{(m)}_{\Omega' \rhd}(a),
\end{equation}
where the $b_l$'s are the Bernoulli numbers. For $a \in D$ we define the map
\begin{equation}
\label{eq:W}
	W'(a) := \frac{\mathrm{e}^{L_{a \rhd}} - \id}{{L_{a \rhd}}}(a)= a + \frac 12 a\rhd a + \frac 16 a\rhd(a\rhd a) + \cdots.
\end{equation}
The bijection $W'$ is the compositional inverse of $\Omega'$, i.e., $W'\circ \Omega' = \id =  \Omega'  \circ W'$.

\begin{thm}\label{thm:pre-LieMagnus}
The left and right half-shuffle exponentials, $\mathcal{E}_\prec (a)$ respectively $\mathcal{E}_\succ (a)$, satisfy
\begin{equation}
\label{shuffleexp-solution}
	\mathcal{E}_\prec (a) = \exp^*\!\big(\Omega'(a)\big)
	\ \qquad
	\mathcal{E}_\succ (a) = \exp^*\!\big(-\Omega'(-a)\big).
\end{equation}
\end{thm}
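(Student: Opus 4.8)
The plan is to prove the left identity $\mathcal{E}_\prec(a) = \exp^*\!\big(\Omega'(a)\big)$ first and then obtain the right one for free. Indeed, once the left identity is known, Lemma \ref{lem:inverse}(1) gives $\mathcal{E}_\succ(-a) = \mathcal{E}_\prec(a)^{*-1} = \exp^*\!\big(\Omega'(a)\big)^{*-1} = \exp^*\!\big(-\Omega'(a)\big)$, and replacing $a$ by $-a$ yields $\mathcal{E}_\succ(a) = \exp^*\!\big(-\Omega'(-a)\big)$. So the whole theorem reduces to the left statement.

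For the left statement I would exploit that, by Lemma \ref{lem:inverse}(2) and Definition \ref{def:left-right-log}, the maps $\mathcal{E}_\prec$ and $\mathcal{L}_\prec$ are mutually inverse bijections, so it suffices to prove the ``inverse-direction'' identity $\mathcal{L}_\prec\big(\exp^*(\omega)\big) = W'(\omega)$ for all $\omega \in D$, with $W'$ as in \eqref{eq:W}. Granting this and setting $\omega := \log^*\!\mathcal{E}_\prec(a)$ (so $\mathcal{E}_\prec(a) = \exp^*(\omega)$), one gets $W'(\omega) = \mathcal{L}_\prec(\mathcal{E}_\prec(a)) = a = W'\big(\Omega'(a)\big)$ using $\mathcal{L}_\prec \circ \mathcal{E}_\prec = \id$ and the inverse relation $W' \circ \Omega' = \id$ recorded after \eqref{eq:W}; since $W'$ is a bijection, $\omega = \Omega'(a)$, which is exactly the claim. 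I would establish $\mathcal{L}_\prec(\exp^*(\omega)) = W'(\omega)$ through a one-parameter family: put $E_s := \exp^*(s\omega)$ and $f(s) := \mathcal{L}_\prec(E_s)$, so that by mutual inverseness $f(s)$ obeys the fixed-point relation $E_s = \un + f(s)\prec E_s$ from \eqref{recursion}. Differentiating this in $s$, using $\tfrac{d}{ds}E_s = \omega * E_s$ together with the half-shuffle relation \eqref{A1} (which gives $f(s)\prec(\omega * E_s) = (f(s)\prec\omega)\prec E_s$) and the pre-Lie relation \eqref{denpreLie} applied to the augmentation-ideal part $E_s - \un \in D$, the terms involving $(f(s)\prec\omega)\prec E_s$ cancel and I obtain the clean first-order equation
\begin{equation*}
	f'(s)\prec E_s = \omega\prec E_s + \omega \rhd (E_s - \un), \qquad f(0)=0 .
\end{equation*}
Here one must be careful that the pre-Lie relation $\omega\rhd u = \omega\succ u - u\prec\omega$ is used only on $u = E_s-\un \in D$, since the unital conventions $\omega\rhd\un := \omega$, $\un\prec\omega := 0$ are not compatible with it at the unit.

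It then remains to solve this equation. The right-multiplication operator $R_{\prec E_s}\colon u \mapsto u\prec E_s$ factors, by iterated use of \eqref{A1} (giving $R_{\prec\omega^{*n}} = R_{\prec\omega}^{\,n}$), as $R_{\prec E_s} = \mathrm{e}^{sR_{\prec\omega}}$; inverting it recasts the equation as $f'(s) = \omega + \mathrm{e}^{-sR_{\prec\omega}}\big(\omega\rhd(E_s-\un)\big)$, and the target $f(s) = \tfrac{\mathrm{e}^{sL_{\omega\rhd}}-\id}{L_{\omega\rhd}}(\omega)$ is equivalent to $f'(s) = \mathrm{e}^{sL_{\omega\rhd}}(\omega)$. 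Thus everything collapses to the single operator identity
\begin{equation*}
	\mathrm{e}^{-sR_{\prec\omega}}\big(\omega\rhd(\exp^*(s\omega)-\un)\big) = \big(\mathrm{e}^{sL_{\omega\rhd}}-\id\big)(\omega),
\end{equation*}
after which integration in $s$ with $f(0)=0$ and evaluation at $s=1$ produces $\mathcal{L}_\prec(\exp^*(\omega)) = W'(\omega)$ and hence, via \eqref{preLieMagnus}, the theorem. I expect this last operator identity — effectively a closed evaluation of $\omega\rhd\exp^*(s\omega)$ purely through left half-shuffles against $E_s$ — to be the main obstacle. It is precisely where the full force of the shuffle relations \eqref{A1}--\eqref{A3} is needed, in particular the commutation $L_{\omega\succ}R_{\prec\omega} = R_{\prec\omega}L_{\omega\succ}$ coming from \eqref{A2}, which disentangles the $\succ$-part of $\rhd$ from the $\prec$-conjugation $\mathrm{e}^{sR_{\prec\omega}}$; this is exactly the structural content of the pre-Lie Magnus expansion of \cite{EFM09} that \eqref{preLieMagnus} encodes, and I would prove it either by an induction on the shuffle degree matching both sides coefficientwise or by a second ODE argument showing both sides satisfy the same linear recursion in $s$.
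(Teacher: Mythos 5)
The paper itself gives no proof of Theorem \ref{thm:pre-LieMagnus}: it is recalled from \cite{EFM09,EFP17}, so there is no in-paper argument to compare yours against. Your route --- reducing everything to the single identity $\mathcal{L}_\prec(\exp^*(\omega)) = W'(\omega)$ and proving it via a formal one-parameter ODE for $f(s)=\mathcal{L}_\prec(\exp^*(s\omega))$ --- is essentially the dendriform-equations argument of \cite{EFM09}, and it is correct; the $\succ$-half does follow for free from Lemma \ref{lem:inverse}(1) as you say. Two steps should be tightened. First, in deriving your first-order equation the term $(f(s)\prec\omega)\prec E_s$ does not literally cancel; rather, by \eqref{A1} together with $\omega * E_s = E_s * \omega$ and the fixed-point relation $f(s)\prec E_s=E_s-\un$, one has
\begin{equation*}
 (f(s)\prec\omega)\prec E_s = f(s)\prec(\omega * E_s) = f(s)\prec(E_s * \omega) = (f(s)\prec E_s)\prec\omega = (E_s-\un)\prec\omega,
\end{equation*}
and it is this rewriting, combined with $\omega * E_s = \omega\prec E_s + \omega\succ(E_s-\un)$, that assembles the pre-Lie term $\omega\rhd(E_s-\un)$ and yields exactly the equation you state. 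Second, the operator identity you flag as the main obstacle is in fact immediate: applying the invertible operator $R_{\prec E_s}=\mathrm{e}^{sR_{\prec\omega}}$ to both sides and using $[L_{\omega\succ},R_{\prec\omega}]=0$ from \eqref{A2}, it becomes
\begin{equation*}
 \omega\succ(E_s-\un)-(E_s-\un)\prec\omega \;=\; \big(\mathrm{e}^{sL_{\omega\succ}}-\mathrm{e}^{sR_{\prec\omega}}\big)(\omega),
\end{equation*}
which, comparing coefficients of $s^n/n!$ and using $L_{\omega\succ}^{\,n}(\omega)=\omega^{*n}\succ\omega$ and $R_{\prec\omega}^{\,n}(\omega)=\omega\prec\omega^{*n}$, reads $\omega\rhd\omega^{*n}=\omega^{*n}\rhd\omega$, i.e.\ $\omega*\omega^{*n}=\omega^{*n}*\omega$ --- true by associativity of $*$. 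With these two points made explicit your proof is complete.
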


In the commutative case, i.e., when $L_{a\succ} = R_{\prec a}$ for $a \in D$, the map $\Omega'$ reduces to the identity map. Hence, in a commutative shuffle algebra the two fixed point equations in \eqref{recursion} coincide and the solution is given by $X= \exp^*\!(a).$ 

From \eqref{shuffleexp-solution} we deduce a key identity in shuffle algebra connecting the three exponentials.

\begin{lem}\label{lem:transforming}
Let $D$ be a shuffle algebra, and $\overline D$ its augmentation by the unit $\un$. For $a \in D$ the following identity holds 
\begin{equation}
 \label{transforming}
	\mathcal{E}_{\prec}\big(W'(a)\big)	
	= \exp^*\!(a)  
	= \mathcal{E}_{\succ}\big(-W'(-a)\big).
\end{equation}
\end{lem}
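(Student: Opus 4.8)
The plan is to derive \eqref{transforming} directly from Theorem \ref{thm:pre-LieMagnus} together with the fact, recorded just before that theorem, that $W'$ is the compositional inverse of $\Omega'$, i.e.\ $W' \circ \Omega' = \id = \Omega' \circ W'$. The two identities in \eqref{shuffleexp-solution} express both half-shuffle exponentials as ordinary shuffle exponentials of the pre-Lie Magnus expansion $\Omega'$ (with the sign twist $a \mapsto -a$ in the $\succ$ case). Since $W'$ inverts $\Omega'$, feeding the appropriate arguments into \eqref{shuffleexp-solution} should collapse the inner compositions and leave $\exp^*(a)$ on both sides.

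First, for the left-hand identity, I would apply the first equation of \eqref{shuffleexp-solution} with $a$ replaced by $W'(a)$. Before doing so I would note that $W'(a) = a + \tfrac12 a \rhd a + \cdots$ has no scalar (degree-zero) component, so it is a legitimate argument for $\mathcal{E}_\prec$, and that the composition $\Omega' \circ W'$ is well defined in the graded or free formal setting emphasised at the start of the section. This yields $\mathcal{E}_\prec\big(W'(a)\big) = \exp^*\!\big(\Omega'(W'(a))\big) = \exp^*\!(a)$, the last step using $\Omega' \circ W' = \id$.

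For the right-hand identity, I would apply the second equation of \eqref{shuffleexp-solution} with $a$ replaced by $-W'(-a)$, again observing that $-W'(-a) \in D$. Writing $b := -W'(-a)$ so that $-b = W'(-a)$, this gives $\mathcal{E}_\succ(b) = \exp^*\!\big(-\Omega'(-b)\big) = \exp^*\!\big(-\Omega'(W'(-a))\big)$. The inner composition simplifies by $\Omega' \circ W' = \id$ to $\Omega'(W'(-a)) = -a$, so the right-hand side becomes $\exp^*\!\big(-(-a)\big) = \exp^*\!(a)$, which closes the chain of equalities.

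I do not anticipate a genuine obstacle: the statement is essentially a bookkeeping consequence of the inversion $W' = {\Omega'}^{-1}$ and the sign symmetry between the $\prec$ and $\succ$ fixed-point equations in \eqref{recursion}. The only points requiring care are the careful tracking of the signs in the $\succ$ case, ensuring that the double sign change $-\Omega'(-\,\cdot\,)$ applied to the argument $-W'(-a)$ cancels to give $\exp^*\!(a)$ rather than $\exp^*\!(-a)$, and the tacit verification that every argument lies in the augmentation ideal $D$ so that the exponential formulas of Theorem \ref{thm:pre-LieMagnus} genuinely apply.
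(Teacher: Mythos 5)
Your proposal is correct and follows exactly the route the paper intends: the lemma is stated as a direct deduction from \eqref{shuffleexp-solution} by substituting $W'(a)$ (respectively $-W'(-a)$) and using $\Omega'\circ W'=\id$, which is precisely your argument, sign bookkeeping included. No discrepancy with the paper's (essentially one-line) proof.
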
 

The interplay between the pre-Lie Magnus expansion and its inverse becomes most intriguing when combining it with the Baker--Campbell--Hausdorff expansion. Indeed, one can show that \cite{EFP18,Manchon11}
\begin{equation}
\label{BCHpreLie}
	\mathrm{BCH}\big(\Omega'(a),\Omega'(b)\big)=\Omega'(a \# b),
\end{equation}
where 
\begin{equation}
\label{BCHproduct1}
	a \# b 
	= W' \big(\mathrm{BCH} \big(\Omega'(a),\Omega'(b)\big)\big).
\end{equation}
From $W'(a) = \mathrm{e}^{L_{a\rhd}}\un - \un$ it follows that
$$
	W'(a) \# W'(b) 
	= W'\big(\mathrm{BCH} (a,b)\big)
	=\mathrm{e}^{L_{a\rhd}}\mathrm{e}^{L_{b\rhd}}\un-\un.
$$ 
Hence $W'(a) \# W'(b) = W'(a) + \mathrm{e}^{L_{a\rhd}}W'(b)$. This yields the formula 
\begin{equation}
\label{BCHproduct2}
	a \# b = a + \mathrm{e}^{L_{\Omega'(a) \rhd}}b.
\end{equation}
We can use $L_{a\rhd}(b)=a \rhd b=(L_{a \succ} - R_{\prec a})(b)$ to rewrite \eqref{BCHproduct2}
\begin{align}
	a \# b &
	= a + \mathrm{e}^{L_{\Omega'(a) \succ}}\mathrm{e}^{R_{ \prec\Omega'(a)}}b\nonumber\\ 
	&= a + \exp^*\!\big(\Omega'(a)\big) \succ b \prec \exp^*\!\big(-\Omega'(a)\big) \nonumber\\ 
	&= a + \mathcal{E}_{\prec}(a) \succ b \prec \mathcal{E}^{*-1}_{\prec}(a).  \label{BCHproduct3}
\end{align}
We used shuffle relation \eqref{A1} which implies for $n \ge 0$, that $(\cdots((b \prec  \Omega'(a) ) \prec  \Omega'(a) ) \prec \cdots )\prec \Omega'(a) = b \prec  ( \Omega'(a)^{*n} )$. Likewise, from \eqref{A3} it follows that $\Omega'(a) \succ ( \cdots \succ (\Omega'(a) \succ(  \Omega'(a) \succ b ))\cdots) = ( \Omega'(a)^{*n} ) \succ b$. 

Using \eqref{BCHpreLie} we note for $a,b \in D$ that
\begin{align*}
	\mathcal{E}_{\prec}(a) \ast \mathcal{E}_{\prec}(b) 
	&= \exp^*\!\big(\Omega'(a)\big) \ast \exp^*\!\big(\Omega'(b)\big)\\
	&= \exp^*\!\big(\mathrm{BCH}\big(\Omega'(a),\Omega'(b)\big)\big)\\
	&= \exp^*\!\big(\Omega'(a \# b)\big)\\
	& = \mathcal{E}_{\prec}(a \# b).
\end{align*}
Similarly, we have that $\mathcal{E}_{\succ}(a) \ast \mathcal{E}_{\succ}(b) = \mathcal{E}_{\succ}(-(-b \# -a))$, where  we used a classical property of the Baker--Campbell--Hausdorff series, $\mathrm{BCH}(a,b)=-\mathrm{BCH}(-b,-a)$.
 
\begin{defn}\cite{EFM09,EFP18}\label{def:groupaction}
Let $D$ be a shuffle algebra, and $\overline D$ its unital augmentation. For $x,y \in D$ we define 
\begin{align}
	  y^x:={Ad}_{x}(y):= \mathcal{E}^{*-1}_{\prec}(x) \succ y \prec  \mathcal{E}_{\prec}(x) \label{shuffleaction1}\\
	  y_x:={Ad}^{x}(y):= \mathcal{E}_{\prec}(x) \succ y \prec  \mathcal{E}^{*-1}_{\prec}(x) \label{shuffleaction22}. 
\end{align}
\end{defn}

First, we note that from \eqref{transforming} in Theorem \ref{lem:transforming} we see that the left half-shuffle exponentials in \eqref{shuffleaction1} and \eqref{shuffleaction22} can be expressed in terms of $\exp^*$ as well as the right half-shuffle exponential
\begin{align*}
	y^x 
	&= \exp^*\!(- \Omega'(x)) \succ y \prec \exp^*\!(\Omega'(x)) \\
	&= \mathcal{E}^{*-1}_{\succ}(-W'(-\Omega'(x)) \succ y \prec  \mathcal{E}_{\succ}(-W'(-\Omega(x)).
\end{align*}

Next, we show that the identity $\mathcal{E}^{*-1}_{\prec}(x) =  \mathcal{E}_{\succ}(-x)$ implies that
\begin{align*}
	y^{-x}={Ad}_{-x}(y) &= \mathcal{E}^{*-1}_{\prec}(-x) \succ y \prec  \mathcal{E}_{\prec}(-x)\\
				     &= \mathcal{E}_{\succ}(x) \succ y \prec  \mathcal{E}^{*-1}_{\succ}(x)\\ 
				     &=:\widetilde{Ad}_{x}(y).	
\end{align*}

\begin{prop}\label{prop:adjointstructure}
Let $D$ be a shuffle algebra, and $\overline D$ its unital augmentation. For $x,y,z \in D$ we have 
\begin{equation}
	{Ad}_{x}{Ad}_{y}(z) = {Ad}_{y  \# x }(z).	\label{shuffleaction2}
\end{equation}
\end{prop}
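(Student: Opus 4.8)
The plan is to prove the identity by a direct computation that expands both adjoint actions in terms of the left half-shuffle exponentials and then collapses the nested half-shuffles using the three shuffle relations \eqref{A1}--\eqref{A3}. Write $X := \mathcal{E}_{\prec}(x)$ and $Y := \mathcal{E}_{\prec}(y)$ in $\overline D$, with shuffle inverses $X^{*-1}$ and $Y^{*-1}$. Unfolding Definition \ref{def:groupaction} and reading $\mathrm{Ad}_x\mathrm{Ad}_y(z)$ as $\mathrm{Ad}_x\big(\mathrm{Ad}_y(z)\big)$ gives
\begin{equation*}
	\mathrm{Ad}_x \mathrm{Ad}_y(z) = X^{*-1} \succ \big( Y^{*-1} \succ z \prec Y \big) \prec X.
\end{equation*}

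First I would move the outermost right multiplication $\prec X$ inside, past $X^{*-1}\succ$, using relation \eqref{A2} (equivalently, the commutation of $L_{\succ}$ and $R_{\prec}$). Next I would merge the two right half-shuffles $\prec Y$ and $\prec X$ into a single $\prec (Y * X)$ by relation \eqref{A1}, then apply \eqref{A2} once more to bring $X^{*-1}\succ$ back against $Y^{*-1}\succ z$, and finally use relation \eqref{A3} to merge the two left half-shuffles into $(X^{*-1} * Y^{*-1})\succ z$. Carrying out these four rewrites yields
\begin{equation*}
	\mathrm{Ad}_x \mathrm{Ad}_y(z) = \big( X^{*-1} * Y^{*-1}\big) \succ z \prec \big( Y * X \big).
\end{equation*}

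It then remains to identify the two flanking factors. Using the composition law $\mathcal{E}_{\prec}(a) * \mathcal{E}_{\prec}(b) = \mathcal{E}_{\prec}(a \# b)$ established above, one has $Y * X = \mathcal{E}_{\prec}(y \# x)$, and since inversion reverses products in the associative algebra $(\overline D,*)$, also $X^{*-1} * Y^{*-1} = (Y * X)^{*-1} = \mathcal{E}^{*-1}_{\prec}(y \# x)$. Substituting both expressions recovers exactly $\mathcal{E}^{*-1}_{\prec}(y \# x) \succ z \prec \mathcal{E}_{\prec}(y \# x) = \mathrm{Ad}_{y \# x}(z)$, as claimed. The order reversal in the inverse is precisely what turns $\mathrm{Ad}_x \mathrm{Ad}_y$ into $\mathrm{Ad}_{y \# x}$ rather than $\mathrm{Ad}_{x \# y}$.

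The one point requiring care is that relations \eqref{A1}--\eqref{A3} are stated for elements of the non-unital algebra $D$, whereas $X, Y, X^{*-1}, Y^{*-1}$ carry unit components in $\overline D$. I would therefore first record the unital extensions of the three collapse identities, namely $(a \prec u)\prec v = a \prec (u * v)$, $\ u \succ (v \succ a) = (u * v)\succ a$, and $(u \succ a)\prec v = u \succ (a \prec v)$ for $a \in D$ and $u,v \in \overline D$, and verify them by splitting off unit parts $u = \un + u_+$, $v = \un + v_+$ and reducing to \eqref{A1}--\eqref{A3}. This short check is the only genuine obstacle, and these extended identities are in any case the same ones already used implicitly in the derivation of \eqref{BCHproduct3}; once they are in place, the computation above is purely mechanical.
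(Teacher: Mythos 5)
Your proof is correct and follows essentially the same route as the paper's: expand both adjoints via $\mathcal{E}_{\prec}$, collapse the nested half-shuffles to $(Y*X)^{*-1}\succ z\prec(Y*X)$, and invoke $\mathcal{E}_{\prec}(y)*\mathcal{E}_{\prec}(x)=\mathcal{E}_{\prec}(y\#x)$. The only difference is that you spell out the individual applications of \eqref{A1}--\eqref{A3} and the unital extension issue, which the paper leaves implicit.
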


\begin{proof}
We calculate
\begin{align}
	{Ad}_{x}{Ad}_{y}(z)
	&= \mathcal{E}^{*-1}_{\prec}(x) \succ \big( \mathcal{E}^{*-1}_{\prec}(y) \succ z 
		\prec  \mathcal{E}_{\prec}(y) \big) \prec \mathcal{E}_{\prec}(x) \nonumber \\
	&= \big(\mathcal{E}_{\prec}(y) \ast \mathcal{E}_{\prec}(x) \big)^{*-1} \succ z 
		\prec  \big(\mathcal{E}_{\prec}(y) \ast \mathcal{E}_{\prec}(x) \big) \nonumber \\
	&=  \mathcal{E}^{*-1}_{\prec}(y  \# x) \succ z 
		\prec  \mathcal{E}_{\prec}(y  \# x) \nonumber \\
	&= {Ad}_{y  \# x }(z).	\label{shuffleaction2}
\end{align}
\end{proof}

\begin{cor}\label{cor:BCHinverse}
Let $D$ be a shuffle algebra, and $\overline D$ its unital augmentation. For $a,b \in D$ we have that
\begin{equation}
\label{inverseBCH}
	a \# b^a
	=a + b
	=a^{-b}  \# b. 
\end{equation}
\end{cor}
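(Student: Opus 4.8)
The plan is to obtain both equalities by substituting the appropriate one-sided expansion of the $\#$-product and then collapsing the resulting nested half-shuffles by means of the shuffle relations \eqref{A1}--\eqref{A3} together with the fact that $\mathcal{E}_\prec(x)$ and $\mathcal{E}^{*-1}_\prec(x)$ (respectively $\mathcal{E}_\succ(x)$ and $\mathcal{E}^{*-1}_\succ(x)$) are mutually $\ast$-inverse. The first equality is governed by the left expansion \eqref{BCHproduct3}, and the second by its right-handed counterpart.

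For the first equality I would read \eqref{BCHproduct3} as $a \# x = a + {Ad}^{a}(x)$, where ${Ad}^{a}(x)=x_a=\mathcal{E}_\prec(a)\succ x\prec\mathcal{E}^{*-1}_\prec(a)$ is the action \eqref{shuffleaction22}. Setting $x=b^a={Ad}_a(b)$ from \eqref{shuffleaction1} gives $a\#b^a=a+{Ad}^a({Ad}_a(b))$, so it suffices to prove ${Ad}^a\circ{Ad}_a=\id$. This follows directly: in $\mathcal{E}_\prec(a)\succ\big(\mathcal{E}^{*-1}_\prec(a)\succ b\prec\mathcal{E}_\prec(a)\big)\prec\mathcal{E}^{*-1}_\prec(a)$ relation \eqref{A2} lets the outer left-$\succ$ and right-$\prec$ multiplications commute, then \eqref{A3} collapses $\mathcal{E}_\prec(a)\succ(\mathcal{E}^{*-1}_\prec(a)\succ\,\cdot\,)=(\mathcal{E}_\prec(a)\ast\mathcal{E}^{*-1}_\prec(a))\succ\,\cdot\,=\un\succ\,\cdot\,$, while \eqref{A1} collapses $(\,\cdot\,\prec\mathcal{E}_\prec(a))\prec\mathcal{E}^{*-1}_\prec(a)=\,\cdot\,\prec\un$, leaving $b$. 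Alternatively, since $\Omega'(a^{\#-1})=-\Omega'(a)$ yields $\mathcal{E}_\prec(a^{\#-1})=\mathcal{E}^{*-1}_\prec(a)$ via \eqref{shuffleexp-solution}, one has ${Ad}^a={Ad}_{a^{\#-1}}$, whence ${Ad}^a{Ad}_a={Ad}_{a\#a^{\#-1}}={Ad}_0=\id$ by Proposition \ref{prop:adjointstructure}. Either way $a\#b^a=a+b$.

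For the second equality the mechanism should be the mirror image, with the left half-shuffle exponential replaced throughout by the right one. Recalling from the computation preceding Proposition \ref{prop:adjointstructure} that $a^{-b}={Ad}_{-b}(a)=\widetilde{Ad}_b(a)=\mathcal{E}_\succ(b)\succ a\prec\mathcal{E}^{*-1}_\succ(b)$, I would first establish the right-handed companion of \eqref{BCHproduct3}: an expansion of $x\#b$ in which $b$ plays the role of the base and the conjugation is carried out by $\mathcal{E}_\succ(b)$. This is to be extracted from \eqref{BCHproduct2} by re-expressing the pre-Lie exponential through the right half-shuffle, using the formula \eqref{shuffleexp-solution} for $\mathcal{E}_\succ$, the reflection $\mathrm{BCH}(a,b)=-\mathrm{BCH}(-b,-a)$, and the composition rule $\mathcal{E}_\succ(a)\ast\mathcal{E}_\succ(b)=\mathcal{E}_\succ\big(-(-b\#-a)\big)$ recorded just above Definition \ref{def:groupaction}. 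Once this companion formula is in place, substituting $a^{-b}=\widetilde{Ad}_b(a)$ and collapsing the nested right half-shuffles exactly as before --- now via $\mathcal{E}^{*-1}_\succ(b)\ast\mathcal{E}_\succ(b)=\un$ and the same use of \eqref{A1}--\eqref{A3} --- should yield $a^{-b}\#b=a+b$.

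The routine part is the collapse of the nested half-shuffles, which is a verbatim repetition of the calculation in the proof of Proposition \ref{prop:adjointstructure}. The main obstacle is the derivation of the right-handed companion of \eqref{BCHproduct3}: one must track carefully the asymmetry between $\Omega'$ and the map $-\Omega'(-\,\cdot\,)$ governing $\mathcal{E}_\succ$, so that the reflected \textsc{bch} signs line up and the $\mathcal{E}_\succ(b)$-conjugation produced by the companion formula is precisely the one inverting the $\mathcal{E}_\succ(b)$-conjugation defining $a^{-b}=\widetilde{Ad}_b(a)$. It is exactly this alignment that makes the right-hand collapse mirror the left-hand one and delivers the second equality in \eqref{inverseBCH}.
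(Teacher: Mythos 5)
Your proposal is correct and follows essentially the same route as the paper: the first equality is obtained from $a\#b^a=a+{Ad}^{a}({Ad}_{a}(b))=a+b$ via \eqref{BCHproduct3} and the identity ${Ad}^{a}{Ad}_{a}=\id$ (which the paper simply asserts as \eqref{inverseAdjoint} and you justify by the same half-shuffle collapse used in Proposition \ref{prop:adjointstructure}), while the second equality is handled by the BCH reflection $\mathrm{BCH}(a,b)=-\mathrm{BCH}(-b,-a)$, exactly as the paper does when it defers that part to the explicit computation in the proof of Theorem \ref{thm:shufflefactorization}.
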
 

\begin{proof} 
First, we observe for the product \eqref{BCHproduct2}, that $a \# b=a + {Ad}^{a}(b)$, and that 
\begin{equation}
\label{inverseAdjoint}
	{Ad}^{a}{Ad}_{a}(b)=(b^a)_a=b=(b_a)^a={Ad}_{a}{Ad}^{a}(b).
\end{equation}
From this we deduce that
$$
	a \# b^a = a + {Ad}^{a}{Ad}_{a}(b)=a+b.
$$ 	
Note that we will show the second equality in \eqref{inverseBCH} explicitly in the following theorem.
\end{proof} 
 
\begin{thm}\cite{EFP17,EFP18}\label{thm:shufflefactorization}
Let $D$ be a shuffle algebra, and $\overline D$ its unital augmentation. For $x,y \in D$ we have the following  factorisations
\begin{equation}
\label{freeConv}
	\mathcal{E}_{\prec}(x + y) = \mathcal{E}_{\prec}(x) * \mathcal{E}_{\prec}(y^x)
\end{equation}
and
\begin{equation}
\label{boolConv}
	\mathcal{E}_{\succ}(x + y) = \mathcal{E}_{\succ}(x^{-y}) * \mathcal{E}_{\succ}(y).
\end{equation}
\end{thm}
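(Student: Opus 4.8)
The plan is to prove the two factorisations directly from the exponential identities established just above, exploiting the group-like behaviour of $\mathcal{E}_{\prec}$ and $\mathcal{E}_{\succ}$ under the $\#$-product together with the adjoint relations from Corollary~\ref{cor:BCHinverse}.

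For the first identity \eqref{freeConv}, I would start from the observation recorded immediately before Definition~\ref{def:groupaction}, namely that $\mathcal{E}_{\prec}(a) * \mathcal{E}_{\prec}(b) = \mathcal{E}_{\prec}(a \# b)$. Applying this with $a=x$ and $b=y^x$ gives
\[
	\mathcal{E}_{\prec}(x) * \mathcal{E}_{\prec}(y^x) = \mathcal{E}_{\prec}(x \# y^x).
\]
Now I invoke the left equality of Corollary~\ref{cor:BCHinverse}, which states precisely that $x \# y^x = x + y$. Substituting this into the right-hand side yields $\mathcal{E}_{\prec}(x+y)$, which is exactly \eqref{freeConv}. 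So the whole content of the first factorisation is the morphism property of $\mathcal{E}_{\prec}$ combined with the already-proven adjoint cancellation.

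For the second identity \eqref{boolConv}, I would run the parallel argument on the right side. The analogue of the morphism property derived above reads $\mathcal{E}_{\succ}(a) * \mathcal{E}_{\succ}(b) = \mathcal{E}_{\succ}\!\big(-(-b \,\#\, -a)\big)$, again from the remark preceding Definition~\ref{def:groupaction}, using $\mathrm{BCH}(a,b)=-\mathrm{BCH}(-b,-a)$. Setting $a=x^{-y}$ and $b=y$, I must show that $-\big((-y)\,\#\,(-x^{-y})\big)=x+y$. Using the definition $\widetilde{Ad}_{y}=Ad_{-y}$ so that $x^{-y}=Ad_{-y}(x)$, the inner $\#$-product should simplify via the second equality of Corollary~\ref{cor:BCHinverse}, namely $a^{-b}\#b = a+b$; reading that identity with the roles of the arguments adjusted (and using $-(p\# q)=(-q)\#(-p)$, a direct consequence of $\mathrm{BCH}(p,q)=-\mathrm{BCH}(-q,-p)$ fed through $\Omega'$ and $W'$) collapses the expression to $x+y$, giving \eqref{boolConv}.

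The main obstacle will be bookkeeping the sign conventions and the order-reversal in the $\succ$-case: unlike the left factorisation, which follows transparently, the right factorisation requires carefully tracking how the anti-symmetry $\mathrm{BCH}(a,b)=-\mathrm{BCH}(-b,-a)$ interacts with the definition of $x^{-y}$ and with the identity $a^{-b}\#b=a+b$. I would therefore verify the second equality of Corollary~\ref{cor:BCHinverse} explicitly (as that corollary's proof defers it to this theorem) by writing $a^{-b}\#b = a^{-b} + Ad^{b}(b^{\,\ast})$-type expansion and applying the cancellation $Ad^{b}Ad_{-b}$, rather than trusting the symmetry blindly; once that single identity is confirmed, both factorisations drop out immediately.
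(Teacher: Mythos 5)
Your approach is essentially the paper's: both factorisations are reduced, via the morphism identities $\mathcal{E}_\prec(a)\ast\mathcal{E}_\prec(b)=\mathcal{E}_\prec(a\# b)$ and $\mathcal{E}_\succ(a)\ast\mathcal{E}_\succ(b)=\mathcal{E}_\succ(-(-b\#-a))$, to the two equalities of Corollary~\ref{cor:BCHinverse}, and the paper's own proof of \eqref{boolConv} is exactly the $\exp^*$/BCH computation you outline, ending with the cancellation $(x^{-y})_{-y}=x$. Your treatment of \eqref{freeConv} is complete as stated.

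The one step that would fail as literally written is your proposed patch for the deferred second equality $a^{-b}\#b=a+b$: the ``cancellation $Ad^{b}Ad_{-b}$'' is not an identity of the theory. Identity \eqref{inverseAdjoint} gives $Ad^{c}Ad_{c}=\id$ with \emph{matching} arguments, whereas $Ad^{y}Ad_{-y}\neq\id$ in general, since $\mathcal{E}_\prec(-y)\neq\mathcal{E}^{*-1}_\prec(y)$. The correct closing move (the one the paper makes) is $(-y)\#(-x^{-y})=-y+Ad^{-y}(-x^{-y})=-y-Ad^{-y}Ad_{-y}(x)=-y-x$, i.e.\ the instance $c=-y$ of \eqref{inverseAdjoint}; equivalently, observe that $-x^{-y}=(-x)^{-y}$ by linearity of $Ad_{-y}$ and apply the already-proved \emph{first} equality $a\#b^{a}=a+b$ with $a=-y$, $b=-x$, which also disposes of any circularity with Corollary~\ref{cor:BCHinverse}. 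With that correction your argument is complete.
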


\begin{proof}
The proof of both \eqref{freeConv} and \eqref{boolConv} follows from \eqref{BCHpreLie} together with \eqref{BCHproduct3}. We verify \eqref{boolConv} explicitly. 
\allowdisplaybreaks
\begin{align*}
	\mathcal{E}_{\succ}(x^{-y}) * \mathcal{E}_{\succ}(y)
	&=  \exp^*\!\big(-\Omega'(-x^{-y})\big) \ast \exp^*\!\big(-\Omega'(-y)\big)\\
	&=  \exp^*\!\big(\mathrm{BCH}\big(-\Omega'(-x^{-y}),-\Omega'(-y)\big)\big)\\ 
	&=  \exp^*\!\big(-\mathrm{BCH}\big(\Omega'(-y),\Omega'(-x^{-y})\big)\big)\\ 
	&= \exp^*\!\big(-\Omega'(-y \# - x^{-y})\big)\\
	&= \exp^*\!\big(-\Omega'(-y - (x^{-y})_{-y})\big)\\   
	&= \exp^*\!\big(-\Omega'(-y - x)\big)\\
	&= \mathcal{E}_{\succ}(x+y).  
\end{align*}
\end{proof}


\section{Monotone, free and boolean cumulants}
\label{sec:cumulants}

Let us return to Example \ref{ex:HopfAlg} and the Hopf algebra $H=T(T_+(A))$ in Theorem \ref{thm:HA}. Recall that it is connected, graded, non-cocommutative, and non-commutative. Its antipode $S \in \mathrm{End}_\mathbb{K}(H,H)$, i.e., the inverse of the identity $\id \in \mathrm{End}_\mathbb{K}(H,H)$ with respect to the convolution product defined on $\mathrm{End}_\mathbb{K}(H,H)$ in terms of the coproduct \eqref{HopfAlg} on $H$, is given by
\begin{equation}
\label{antipode}
	S=\sum_{i \ge 0} (-1)^i P^{\ast i}.
\end{equation}
The linear map $P:= \id - e$ is the augmentation projector, that is, $P(\un)=0$ and $P=\id$ on the kernel of the counit, $H_+=T_+(T_+(A))$. 

\begin{defn}\label{def:group}
A character $\Phi \in \mathrm{Lin}(H,\mathbb{K})$ is a unital multiplicative map, i.e., $\Phi(\un)=1$ and $\Phi(w | w')=\Phi(w)\Phi(w')$, for $w,w' \in H_+$. An infinitesimal character $\kappa \in \mathrm{Lin}(H,\mathbb{K})$ is a map such that  $\kappa(\un)=0$ and $\kappa(w | w')=0,$ for $w,w' \in H_+$. 
\end{defn}

Recall that the set $G \subset \mathrm{Lin}(H,\mathbb{K})$ of characters forms a group with respect to the convolution product \cite{FGB05,Manchon08}. The convolution inverse of a character $\Phi \in G$ is $\Phi^{*-1}:=\Phi \circ S$. The space $g \subset \mathrm{Lin}(H,\mathbb{K})$ of infinitesimal characters forms a Lie algebra for the Lie bracket $[\alpha,\beta]:=\alpha \ast \beta - \beta \ast \alpha$. The logarithm and exponential maps, $\exp^*$ and $\log^*$, are set isomorphisms between the group $G$ and its Lie algebra $g$. Recall that for any infinitesimal character $\alpha \in g$ and any word $w \in T_+(A)$ of finite length $|w|$, the exponential reduces to a finite sum, i.e., $\exp^*\!(\alpha)(w) = \sum_{j= 1}^{|w|} \frac{1}{j!}\alpha^{* j}(w)$. The same holds for the logarithm, $\log^*(e +\alpha)(w)=\sum_{l= 1}^{|w|}{(-1)^{l-1}\over l}\alpha^{* l}(w)$. For any $\alpha \in g$, the left and right half-shuffle exponentials, $\mathcal{E}_{\succ}(\alpha)$ respectively $\mathcal{E}_{\prec}(\alpha)$, also reduce to finite sums when applied to a word $w \in T_+(A)$ of finite length, i.e., $\mathcal{E}_{\prec}(\alpha)(w)=\sum_{j=1}^{|w|} \alpha^{\prec j}(w)$, and similarly for $\mathcal{E}_{\succ}(w)$. 

Both half-shuffle exponentials provide as well natural bijections between $G$ and $g$ \cite{EFP17,EFP18}. 
It follows that for $\Phi \in G$ there exist unique infinitesimal characters $\alpha$, $\beta$, $\gamma$ in $g$ such that 
\begin{equation}
\label{cumulantrelations}
	\Phi = \exp^*\!(\rho) = \mathcal{E}_\prec(\alpha) = \mathcal{E}_\succ(\beta).
\end{equation}
From this identity together with Theorem \ref{thm:pre-LieMagnus} the following relations between $\alpha$, $\beta$, $\gamma$ in $g$ can be deduced
\begin{equation}
\label{linked}
	\alpha=W'(\rho),\qquad \beta = - W'(-\rho)
\end{equation}
from which $\alpha=W'(- \Omega'(-\beta))$ follows (see \cite{EFP17} for details). 

\smallskip

We now consider $H=T(T_+(A))$ where $(A,\varphi)$ is supposed to be a non-commutative probability space, i.e., a unital $\mathbb{K}$-algebra $A$ with map $\varphi \colon  A \to \mathbb{K}$, and $\varphi(1_A)=1$. See \cite{NicaSpeicher06} for details. First, $\varphi$ is extended to a linear map $\phi$ from $T_+(A)$ to $\mathbb{K}$ by defining $\phi(a_1 \cdots a_n):=\varphi(a_1 \cdot_A \cdots \cdot_A a_n)$. Then $\phi$ is extended to a character $\Phi$ on $H$. For a word $w=a_1 \cdots a_n \in T_+(A)$, the $n$-th order multivariate moment is defined by 
$$
	m_n(a_1,\ldots,a_n):=\varphi(a_1 \cdot_A \cdots \cdot_A a_n)=\Phi(w).
$$ 
From \cite{EFP15,EFP16,EFP18} the next theorem follows. 

\begin{thm} \cite{EFP18} 
Let $(A,\varphi)$ be a non-commutative probability space with unital map $\varphi \colon A \to \mathbb{K}$ and $\Phi$ its extension to $H$ as a character. Let $\rho$, $\kappa$, $\beta$ in $g$ be infinitesimal characters defined in terms of the shuffle algebra identity
\begin{equation}
\label{linkCumulants}
	\Phi = \exp^*\!(\rho) 
			= \mathcal{E}_\prec(\kappa) 
			= \mathcal{E}_\succ(\beta).
\end{equation} 
For the word $w=a_1 \cdots a_n \in T_+(A)$ we set $h_n(a_1,\ldots,a_n)=\rho(w)$,  $k_n(a_1,\ldots,a_n)=\kappa(w)$, and $r_n(a_1,\ldots,a_n)=\beta(w)$. The maps $h_n$, $k_n$, $r_n$ identify respectively with multivariate free, boolean and monotone cumulants and we obtain the following multivariate moment-cumulant relations
\begin{enumerate}
\item[i)] Free moment-cumulant relation \cite{NicaSpeicher06}:
\begin{equation}
\label{free}
	\mathcal{E}_\prec(\kappa)(w) = \sum_{j=1}^n \kappa^{\prec j}(w) = \sum_{\pi \in NC_n} k_\pi(a_1, \ldots, a_n ),
\end{equation}
where $k_\pi(a_1, \ldots, a_n):=\prod_{\pi_i \in \pi} \kappa(a_{\pi_i}).$

\item[ii)] Boolean moment-cumulant relation \cite{Speicher97b}:
\begin{equation}
\label{boolean}
	\mathcal{E}_\succ(\beta)(w) 
					= \sum_{j=1}^n \beta^{\succ j}(w) 
					=\sum_{I \in B_n} r_I(a_1, \ldots, a_n),
\end{equation}
where  $r_I(a_1, \ldots, a_n):=\prod_{l_k \in I} \beta(a_{l_k}).$
 
\item[iii)] Monotone moment-cumulant relation \cite{HasebeSaigo11}:
\begin{equation}
\label{monotone2}
	\exp^*\!(\rho)(w) 
		= \sum_{j=1}^n \frac{\rho^{* j}(w)}{j!} 
		= \sum\limits_{\gamma \in NC_n} \frac{1}{\tau(\gamma)!} h_{\gamma}(a_1, \ldots, a_n),
\end{equation}
The tree factorial $\tau(\gamma)!$ corresponds to the forest $\tau(\gamma)$ of rooted trees encoding the nesting structure of the non-crossing partition $\gamma \in NC_n$ \cite{Arizmendi15}, and  $h_\pi(a_1, \ldots, a_n):=\prod_{\pi_i \in \pi} \rho(a_{\pi_i})$. 

\end{enumerate}
We call the Lie algebra elements $\rho$, $\kappa$, $\beta \in g$ the monotone, free and boolean infinitesimal cumulant characters, respectively. 
\end{thm}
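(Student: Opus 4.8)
The plan is to establish the three moment-cumulant relations one at a time, treating the free and boolean cases by mirror-image half-shuffle recursions and the monotone case by a direct unfolding of the full coproduct. As a preliminary remark, the existence and uniqueness of the infinitesimal characters $\rho$, $\kappa$, $\beta$ satisfying \eqref{linkCumulants} is already guaranteed by the bijectivity of the three maps $\exp^*$, $\mathcal{E}_\prec$, $\mathcal{E}_\succ$ from $g$ to $G$ recalled just above the theorem. Moreover, the first equality in each of \eqref{free}, \eqref{boolean}, \eqref{monotone2} — the reduction of the exponential to the finite sums $\sum_{j=1}^n\kappa^{\prec j}(w)$, $\sum_{j=1}^n\beta^{\succ j}(w)$, $\sum_{j=1}^n\frac{1}{j!}\rho^{*j}(w)$ on a word of length $n$ — is exactly the grading statement recalled before the theorem. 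Hence the only content to establish is the second equality in each case: the identification with the classical sums over $NC_n$, $B_n$, and over $NC_n$ weighted by tree factorials.

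For the free relation \eqref{free} I would argue by induction on the length $n$, exploiting that $\Phi=\mathcal{E}_\prec(\kappa)$ is a \emph{character}, hence multiplicative for the bar-product, whereas $\kappa$ is an infinitesimal character, hence vanishes on every nontrivial bar-product. Feeding the fixed-point equation $\mathcal{E}_\prec(\kappa)=\un+\kappa\prec\mathcal{E}_\prec(\kappa)$ into $w=a_1\cdots a_n$ and unfolding the left half-coproduct \eqref{lefthalfcoprod} yields, after using multiplicativity of $\Phi$ on $a_{J^S_{[n]}}=a_{J_1}|\cdots|a_{J_k}$, the recursion
\begin{equation*}
	\mathcal{E}_\prec(\kappa)(a_1\cdots a_n)=\sum_{1\in S\subseteq[n]}\kappa(a_S)\,\prod_{i=1}^k\mathcal{E}_\prec(\kappa)(a_{J_i}),
\end{equation*}
where $J_1,\ldots,J_k$ are the connected components of $[n]\setminus S$. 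This is precisely the classical recursion for $NC_n$: the block $S$ containing the minimal element $1$ is the outer block, and the remaining blocks partition the gaps $J_1,\ldots,J_k$ non-crossingly. The inductive hypothesis applied to each shorter word $a_{J_i}$ then reconstructs $\sum_{\pi\in NC_n}k_\pi(a_1,\ldots,a_n)$. The boolean relation \eqref{boolean} follows from the mirror-image recursion coming from $\mathcal{E}_\succ(\beta)=\un+\mathcal{E}_\succ(\beta)\succ\beta$: unfolding the right half-coproduct \eqref{righthalfcoprod} and using that $\beta$ is infinitesimal forces the argument of $\beta$ — the complement of $S$, which always contains $1$ — to be a single connected component, i.e.\ an initial interval $[1,j]$, giving
\begin{equation*}
	\mathcal{E}_\succ(\beta)(a_1\cdots a_n)=\sum_{j=1}^{n}\beta(a_1\cdots a_j)\,\mathcal{E}_\succ(\beta)(a_{j+1}\cdots a_n),
\end{equation*}
which is the recursion generating interval partitions and yields $\sum_{I\in B_n}r_I(a_1,\ldots,a_n)$.

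The monotone relation \eqref{monotone2} is the genuinely harder case, and I expect the tree-factorial bookkeeping to be the main obstacle. Here there is no half-shuffle fixed-point equation to exploit; instead I would expand $\rho^{*j}=m_\mathbb{K}^{(j-1)}\circ\rho^{\otimes j}\circ\Delta^{(j-1)}$ in terms of the full iterated coproduct \eqref{HopfAlg}. Since $\rho$ is infinitesimal, only those iterated splittings survive whose successive supports form the blocks of a single non-crossing partition $\gamma\in NC_n$, each carrying a linear order compatible with the nesting of its blocks. The combinatorial heart of the argument is to show that, for fixed $\gamma$ with $j$ blocks, the number of such admissible orders equals $j!/\tau(\gamma)!$, the factor $\tau(\gamma)!$ being the factorial of the nesting forest (a product of subtree sizes, so $j!/\tau(\gamma)!$ counts increasing labelings of that forest). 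Multiplying by the prefactor $1/j!$ from $\exp^*$ then converts this count into the weight $1/\tau(\gamma)!$ and delivers $\sum_{\gamma\in NC_n}\frac{1}{\tau(\gamma)!}h_\gamma(a_1,\ldots,a_n)$.

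The delicate point throughout the monotone argument is identifying the correct poset on blocks and verifying that the iterated coproduct extracts each block exactly in the orders compatible with that poset, so that the linear-extension count $j!/\tau(\gamma)!$ is reproduced with the right multiplicity. One could instead invoke the algebraic relations \eqref{linked} linking $\rho$ to $\kappa$ and $\beta$ through the map $W'$ together with Theorem \ref{thm:pre-LieMagnus}, which at least furnishes a consistency check against the already-established free and boolean expansions; but producing the tree-factorial weights themselves appears to require the direct linear-extension count, and this is the step demanding the most care.
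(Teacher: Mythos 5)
The paper does not actually prove this theorem: it is imported wholesale from \cite{EFP18}, the only in-text justification being the remark that the first equality in each of \eqref{free}, \eqref{boolean}, \eqref{monotone2} comes from evaluating on a word of finite length. So there is no ``paper's own proof'' to match against; the relevant comparison is with the strategy of the cited reference and with the analogous computation the paper \emph{does} carry out, namely the proof of Lemma \ref{lem:adjoint}. Measured against that, your treatment of parts (i) and (ii) is correct and is essentially the standard argument: unfolding the fixed-point equations $X=\un+\kappa\prec X$ and $Y=\un+Y\succ\beta$ against the half-coproducts \eqref{lefthalfcoprod} and \eqref{righthalfcoprod}, using that $\Phi$ is multiplicative on bar-products while $\beta$ kills them, yields exactly the recursions you write (the block containing $1$ together with independent non-crossing partitions of the gaps $J_i$ in the free case; peeling off the initial interval $[1,j]$ in the boolean case), and these are the defining recursions for $NC_n$ and $B_n$. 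One small imprecision: the block $S\ni 1$ is \emph{an} outer block, not \emph{the} outer block, but this does not affect the bijection.

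Part (iii) is where your write-up stops short of a proof. You correctly identify that $\rho^{*j}(w)$ must be expanded via the iterated coproduct, that only extraction sequences whose supports assemble into a non-crossing partition $\gamma$ survive (because $\rho$ vanishes on nontrivial bar-products), and that the whole statement reduces to the claim that the number of admissible extraction orders for a fixed $\gamma$ with $j$ blocks equals $j!/\tau(\gamma)!$, i.e.\ the number of linear extensions of the nesting forest. But you assert this count rather than establish it: you neither pin down the partial order on blocks that the coproduct actually enforces (which blocks may be extracted before which, given that $\Delta$ always splits off the connected components of a complement), nor verify that each admissible order contributes with multiplicity exactly one, nor prove the forest-factorial formula for the number of increasing labelings. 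These are precisely the points you flag as ``demanding the most care,'' and they constitute the substantive content of the monotone case; as written, (iii) is a correct plan with its central lemma left open, so the proposal is complete for (i)--(ii) but only a sketch for (iii).
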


Note that in all three cases the last equality follows from evaluating the lefthand side on a word of finite length. The next result will be useful.
 
\begin{prop}\label{cor:boolean}
Let $\nu, \tau \in g$ be the free and boolean infinitesimal characters of the state $\Psi=\mathcal{E}_\prec(\nu)=\mathcal{E}_\succ(\tau) \in G$. Following \eqref{shuffleaction2} we deduce from $\Psi^{*-1}=\mathcal{E}^{*-1}_\succ(\tau)=\mathcal{E}_\prec(-\tau)$ that
$$
	\mu_\nu	= {Ad}^{\nu}(\mu)
			=\Psi \succ  \mu \prec \Psi^{*-1} 
			= \mathcal{E}^{*-1}_\prec(-\tau) \succ  \mu \prec \mathcal{E}_\prec(-\tau)
			= {Ad}_{-\tau}(\mu)
			= \mu^{-\tau}.
$$
\end{prop}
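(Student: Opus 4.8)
The plan is to read the displayed chain of equalities as a sequence of elementary rewritings, each justified either by one of the definitions \eqref{shuffleaction1}--\eqref{shuffleaction22} of the two adjoint actions or by the inverse relations of Lemma \ref{lem:inverse}. The only genuine input beyond unwinding notation is the $\succ$-counterpart of Lemma \ref{lem:inverse}(1), namely $\mathcal{E}^{\ast-1}_\succ(\tau) = \mathcal{E}_\prec(-\tau)$, which I would establish first.

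First I would record this mirror inverse identity. Applying Lemma \ref{lem:inverse}(1) to the element $-\tau \in g$ gives $\mathcal{E}^{\ast-1}_\prec(-\tau) = \mathcal{E}_\succ(\tau)$; inverting both sides yields $\mathcal{E}_\prec(-\tau) = \mathcal{E}^{\ast-1}_\succ(\tau)$. Since by hypothesis $\Psi = \mathcal{E}_\succ(\tau)$, these two relations read $\Psi = \mathcal{E}^{\ast-1}_\prec(-\tau)$ and $\Psi^{\ast-1} = \mathcal{E}_\prec(-\tau)$, which are exactly the two substitutions appearing in the middle of the displayed chain.

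Next I would treat the outer equalities. The leftmost one, $\mu_\nu = {Ad}^{\nu}(\mu) = \Psi \succ \mu \prec \Psi^{\ast-1}$, is the definition \eqref{shuffleaction22} combined with $\Psi = \mathcal{E}_\prec(\nu)$. Substituting the two relations from the previous step converts this into $\mathcal{E}^{\ast-1}_\prec(-\tau) \succ \mu \prec \mathcal{E}_\prec(-\tau)$, and the rightmost equalities $\mathcal{E}^{\ast-1}_\prec(-\tau) \succ \mu \prec \mathcal{E}_\prec(-\tau) = {Ad}_{-\tau}(\mu) = \mu^{-\tau}$ are precisely definition \eqref{shuffleaction1} read with $x = -\tau$. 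Chaining these rewritings produces the asserted identity.

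There is no real obstacle here; the argument is purely formal once the mirror inverse identity is in hand. The one point requiring care is the bookkeeping of signs together with the $\prec/\succ$ interchange under convolution inversion, i.e.\ the fact that the left and right half-shuffle exponentials of one infinitesimal character are the convolution inverses of the exponentials of the negated argument. It is \emph{essential} that $\nu$ and $\tau$ are the free and boolean infinitesimal characters of the single state $\Psi$, so that both representations $\Psi = \mathcal{E}_\prec(\nu)$ and $\Psi = \mathcal{E}_\succ(\tau)$ are available simultaneously and can be played against each other in the two half-shuffle factors.
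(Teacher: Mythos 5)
Your proposal is correct and follows essentially the same route as the paper, which presents this proposition as a chain of definitional rewritings hinging on the inverse identity $\Psi^{*-1}=\mathcal{E}^{*-1}_\succ(\tau)=\mathcal{E}_\prec(-\tau)$ (itself the case $a=-\tau$ of Lemma \ref{lem:inverse}) together with the definitions \eqref{shuffleaction1} and \eqref{shuffleaction22} of the two adjoint actions. Your explicit derivation of the mirror inverse identity and the careful sign bookkeeping match the intended argument exactly.
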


 From \eqref{linkCumulants} and \eqref{transforming} it follows that monotone, free and boolean cumulants are related. This implies that one can express monotone, free, and boolean cumulants in terms of each other. See \cite{Arizmendi15} for details. We consider the following lemma, which will be useful in describing these relations. From \cite{Arizmendi15} we recall that an irreducible non-crossing partition is a non-crossing partition of the set $[n]$ with $1$ and $n$ being in the same block. The set of  irreducible non-crossing partitions is denoted by $NC^{irr}_n$. 

\begin{lem}\label{lem:adjoint}
Let $\mu, \nu, \tau$ be infinitesimal characters  in $g$ and $\Psi=\mathcal{E}_\prec(\nu)=\mathcal{E}_\succ(\tau) \in G$ (so that $\nu$ is the free cumulant and $\tau$ the boolean infinitesimal cumulant character associated to $\Psi$). 
The following formula holds for the infinitesimal character $\mu^\nu={Ad}_{\nu}(\mu)=\Psi^{*-1} \succ \mu \prec \Psi$ evaluated on a word $w=a_1 \cdots a_n \in T_+(A)$ of length $n$
\begin{align}
\label{one}
	 \mu^\nu(w)&=\sum_{1,n \in S \subseteq [n]} \mu(a_S)\Psi(a_{J_{[n]}^S})\\
	 		 &=\sum_{\pi \in NC^{irr}_n} \prod_{\mathrm{outer} \atop  \pi_1 \in \pi} \mu_{|\pi_1|}(a_{\pi_1})\prod_{\mathrm{inner} \atop  \pi_i \in \pi} \nu_{|\pi_i|}(a_{\pi_i}) \label{one1}. 
\end{align}

On the other hand, from Proposition \ref{cor:boolean} it follows that $\mu_\nu=\mu^{-\tau}$ evaluated on a word of length $n$, $w=a_1 \cdots a_n \in T_+(A)$, gives
\begin{align}
\label{two}
	\mu_\nu(w)=\mu^{-\tau}(w)&=\sum_{1,n \in S \subseteq [n]} \mu(a_S)\Psi^{*-1} (a_{J_{[n]}^S})\\
	 		 &=\sum_{\pi \in NC^{irr}_n} (-1)^{|\pi|-1} \prod_{\mathrm{outer} \atop  \pi_1 \in \pi} \mu_{|\pi_1|}(a_{\pi_1})\prod_{\mathrm{inner} \atop  \pi_i \in \pi} \tau_{|\pi_i|}(a_{\pi_i}) \label{two1}. 
\end{align}
\end{lem}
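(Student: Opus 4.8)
The plan is to establish \eqref{one} by a direct computation of the triple convolution $\mu^\nu={Ad}_\nu(\mu)=\Psi^{*-1}\succ\mu\prec\Psi$ on the word $w=a_1\cdots a_n$, to read off the combinatorial form \eqref{one1} from the free moment–cumulant relation \eqref{free}, and finally to deduce \eqref{two}–\eqref{two1} formally from the first part together with Proposition \ref{cor:boolean}.

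First I would associate the product as $\mu^\nu=(\Psi^{*-1}\succ\mu)\prec\Psi$, which is legitimate by the half-shuffle relation \eqref{A2}. The advantage is that the infinitesimal character $\mu$ is then evaluated only on single words $a_{S'}$ carrying no bar. Unfolding $\Psi^{*-1}\succ\mu$ through \eqref{righthalfcoprod} on such a word $a_{S'}=a_{s_1}\cdots a_{s_p}$ (with $s_1=1$, since $\prec$ later forces $1\in S'$) and using that $\mu$ vanishes on every element carrying a bar, the only surviving terms are those in which the argument of $\mu$ is a connected \emph{prefix} $\{s_1,\dots,s_j\}$ of $S'$; this gives $(\Psi^{*-1}\succ\mu)(a_{S'})=\sum_{j=1}^{p}\mu(a_{\{s_1,\dots,s_j\}})\,\Psi^{*-1}(a_{\{s_{j+1},\dots,s_p\}})$. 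Substituting this into the outer $\prec\Psi$, unfolded via \eqref{lefthalfcoprod}, and reindexing by the set $U$ that carries $\mu$ (necessarily $1\in U$, and $U$ a prefix of the ambient $S'$) reorganises the whole expression as a sum over $U$ of $\mu(a_U)$ times a sum over the complementary ``tail''.

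The crux — and the step I expect to be the main obstacle — is this last tail sum. Writing $M=\{\max U+1,\dots,n\}$, the element $\max U\in U$ acts as a wall separating the inner gaps of $U$ from the tail, so that $\Psi(a_{J^{U\cup V}_{[n]}})=C_U\cdot\Psi(a_{J^V_M})$ for $V\subseteq M$, where $C_U$ is the contribution of the connected components of $[1,\max U]\setminus U$ and is independent of $V$. The remaining inner sum is precisely $\sum_{V\subseteq M}\Psi^{*-1}(a_V)\Psi(a_{J^V_M})=(\Psi^{*-1}*\Psi)(a_M)=e(a_M)$ by the defining coproduct \eqref{HopfAlg}; it therefore vanishes whenever $M\neq\emptyset$ and equals $1$ when $M=\emptyset$. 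Hence only those $U$ with $\max U=n$, i.e. $n\in U$, survive, which is exactly \eqref{one}. Making this telescoping watertight requires careful bookkeeping of the unital conventions ($e\succ\mu=\mu$, $\mu\prec e=\mu$), so that the full half-coproducts $\Delta_\prec,\Delta_\succ$ may be used in place of their reduced versions without spurious or missing boundary terms.

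For \eqref{one1} I would then expand $\Psi=\mathcal{E}_\prec(\nu)$ on each connected component $J_i$ of $[n]\setminus S$ by the free relation \eqref{free}; since $1,n\in S$ the block $S$ is the unique outer block, and the non-crossing partitions of the gaps $J_i$ assemble with $S$ into an arbitrary element of $NC^{irr}_n$, yielding \eqref{one1}. For the second part, Proposition \ref{cor:boolean} gives $\mu_\nu=\mu^{-\tau}$, while Lemma \ref{lem:inverse} gives $\mathcal{E}_\prec(-\tau)=\mathcal{E}^{*-1}_\succ(\tau)=\Psi^{*-1}$; applying the already proven \eqref{one} with $\nu$ replaced by $-\tau$ immediately produces \eqref{two}. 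Expanding $\Psi^{*-1}=\mathcal{E}_\prec(-\tau)$ on each gap by \eqref{free} with free cumulant $-\tau$ contributes a factor $-1$ per inner block, i.e. the global sign $(-1)^{|\pi|-1}$, which establishes \eqref{two1}.
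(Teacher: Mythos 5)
Your argument is correct, and it reaches \eqref{one} by a genuinely different route from the paper. The paper proves \eqref{one} by induction on the word length: it rewrites $\mu^\nu=\Psi^{*-1}\succ\mu\prec\Psi$ as $\Psi\succ\mu^\nu=\mu\prec\Psi$, expands both sides (using that the infinitesimal character $\mu^\nu$ kills all barred terms, so only the ``suffix'' terms $\Psi(a_{j+1}\cdots a_n)\mu^\nu(a_1\cdots a_j)$ survive on the left), inserts the induction hypothesis for $\mu^\nu(a_1\cdots a_j)$, and observes that these terms cancel exactly against the $1\in S$, $n\notin S$ part of the right-hand side. Your proof is non-inductive: you associate as $(\Psi^{*-1}\succ\mu)\prec\Psi$ via \eqref{A2}, expand completely, and collapse the sum over tails through the counit identity $\Psi^{*-1}*\Psi=e$ applied to the interval $M=\{\max U+1,\dots,n\}$. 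The two mechanisms are closely related --- the paper's final cancellation is essentially your telescoping in disguise --- but yours makes explicit \emph{why} only the sets $S$ containing both $1$ and $n$ survive (everything with a nonempty tail is annihilated by the counit), at the cost of the more delicate bookkeeping you rightly flag: the factorisation $\Psi(a_{J^{U\cup V}_{[n]}})=C_U\,\Psi(a_{J^V_M})$ uses that $\Psi$ is multiplicative over bars and that $\max U$ separates the gaps of $U$ from those of $V$, and the unital conventions for $\prec$, $\succ$ must be tracked so the boundary terms $T=\emptyset$ and $S'=[n]$ come out right. Your treatment of \eqref{one1}, of \eqref{two} via Proposition \ref{cor:boolean} with $\nu$ replaced by $-\tau$, and of the sign $(-1)^{|\pi|-1}$ in \eqref{two1} coincides with the paper's.
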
 

Notice that we used for notational convenience the symbol $\prod_{\mathrm{outer} \atop  \pi_1 \in \pi}$ although the product is trivial and involves only one block.

\begin{proof} 
We follow the proof given in \cite{EFP18} by using induction on the length of words. Let $w=a_1 \cdots a_n \in T(A)$ and let $\mu \in g$ be an infinitesimal character. The expression $\mu^\nu={Ad}_{\nu}(\mu)=\Psi^{*-1} \succ \mu \prec \Psi$ is equivalent to $\Psi \succ \mu^\nu = \mu \prec \Psi$, such that  
\begin{align}
	\Psi \succ \mu^\nu (w) &= \mu^\nu(w) + \sum_{j=1}^{n-1} \Psi(a_{j+1} \cdots a_n)\mu^\nu(a_1 \cdots a_j)
	=\sum_{1 \in S \subseteq [n]} \mu(a_S) \Psi(a_{J^S_{[n]}}).
\end{align}
This implies 
\allowdisplaybreaks
\begin{align}
	\mu^\nu(w) &=\sum_{1 \in S \subseteq [n]} \mu(a_S) \Psi(a_{J^S_{[n]}})
		- \sum_{j=1}^{n-1} \Psi(a_{j+1} \cdots a_n)\mu^\nu(a_1 \cdots a_j)\\
		&= \sum_{1,n \in S \subseteq [n]} \mu(a_S) \Psi(a_{J^S_{[n]}})
		 + \sum_{1 \in S \subset [n] \atop n \notin S} \mu(a_S) \Psi(a_{J^S_{[n]}})
		 - \sum_{j=1}^{n-1} \Psi(a_{j+1} \cdots a_n)\mu^\nu(a_1 \cdots a_j) \label{calc1}.
\end{align}
A simple calculation for a single letter $a \in A$ shows that $\mu^\nu(a)=\mu(a)$. For a word of length $n=2$ we find 
\begin{align*}
	\mu^\nu(a_1a_2) 
		&=(\Psi^{*-1} \succ  \mu \prec \Psi)(a_1a_2) \\
		&=\Psi^{*-1}(a_2)(\mu \prec \Psi)(a_1) + (\mu \prec \Psi)(a_1a_2)\\
		&= -\Psi(a_2)\mu(a_1) + \mu(a_1a_2) + \mu(a_1)\Psi(a_2)\\
		&=\mu(a_1a_2). 
\end{align*}
Using induction we write $\mu^\nu(a_1 \cdots a_j)= \sum_{1,j \in S \subseteq [j]} \mu(a_S) \Psi(a_{J^S_{[j]}})$ in \eqref{calc1}. Then
\begin{align}
	\mu^\nu(w) &= \sum_{1,n \in S \subseteq [n]} \mu(a_S) \Psi(a_{J^S_{[n]}})
		 + \sum_{1 \in S \subset [n] \atop n \notin S} \mu(a_S) \Psi(a_{J^S_{[n]}})\nonumber\\
		 &- \sum_{j=1}^{n-1} \Psi(a_{j+1} \cdots a_n) \sum_{1,j \in T \subset [j]} \mu(a_T) \Psi(a_{J^T_{[j]}}) \nonumber\\
		 &=\sum_{1,n \in S \subseteq [n]} \mu(a_S) \Psi(a_{J^S_{[n]}}). \label{irreducible}
\end{align}
Here we used that 
$$
	\sum_{1 \in S \subset [n] \atop n \notin S} \mu(a_S) \Psi(a_{J^S_{[n]}})=
		  \sum_{j=1}^{n-1} \Big(\sum_{1,j \in T \subseteq [j]} \mu(a_T) \Psi(a_{J^T_{[j]}})\Big)\Psi(a_{j+1} \cdots a_n).
$$

The formulation in terms of irreducible non-crossing partitions in \eqref{one1} follows from the fact that the sum on the righthand side of \eqref{irreducible} ranges over subsets $S \subseteq [n]$ which always contain both the elements $1$ and $n$. 

For $\mu_\nu={Ad}^{\nu}(\mu)=\Psi \succ  \mu \prec \Psi^{*-1}$ we recall that, as $\tau$ is the boolean infinitesimal cumulant character of $\Psi \in G$, we have $\Psi^{*-1} =\mathcal{E}^{*-1}_\succ(\tau) =\mathcal{E}_\prec(- \tau)$. Therefore, $\mu_\nu=\mu^{-\tau}={Ad}_{-\tau}(\mu)=\mathcal{E}^{*-1} _\prec(- \tau) \succ  \mu \prec \mathcal{E}_\prec(- \tau)$. Following the same argument as before 
this yields
$$
	\mu^{-\tau}(w)  =\sum_{1,n \in S \subseteq [n]} \mu(a_S) \Psi^{*-1}(a_{J^S_{[n]}})
			    =\sum_{1,n \in S \subseteq [n]} \mu(a_S) \mathcal{E}_\prec(- \tau)(a_{J^S_{[n]}}).
$$
Therefore, we have for any word $w \in T(A)$ that $\Psi^{*-1}(w) =  \mathcal{E}_\prec(- \tau)(w) = \sum_{\pi \in NC_n} (-1)^{|\pi|}\tau_\pi(w)$, which implies the coefficient $(-1)^{|\pi| -1}$ on the righthand side in \eqref{two1}. 
\end{proof}

For instance, from this lemma and \eqref{linkCumulants} we deduce immediately the relation between boolean and free cumulants \cite{EFP18}. Indeed, $\mathcal{E}_\prec(\kappa) = \mathcal{E}_\succ(\beta)$ implies that $\Phi \succ \beta = \kappa \prec \Phi$, which yields $\beta = \Phi^{*-1} \succ \kappa \prec \Phi$ and $\kappa = \Phi \succ \beta \prec \Phi^{*-1}$. Therefore
\begin{align*}
	 \beta(w)&=\sum_{1,n \in S \subseteq [n]} \kappa(a_S)\Phi(a_{J_{[n]}^S}) 
	 =\sum_{\pi \in NC^{irr}_n} k_{\pi}(a_1,\ldots,a_n), 
\end{align*}
and
\begin{align*}
	\kappa(w)&=\sum_{1,n \in S \subseteq [n]} \beta(a_S)\Phi^{*-1} (a_{J_{[n]}^S})
	=\sum_{\pi \in NC^{irr}_n} (-1)^{|\pi|-1} r_{\pi}(a_1,\ldots,a_n). 
\end{align*}
In the last equation we used that from $\Phi=\mathcal{E}_\succ(\beta)$ it follows that $\Phi^{*-1}=\mathcal{E}^{*-1}_\succ(\beta)=\mathcal{E}_\prec(-\beta)$.


\section{Conditionally free cumulants revisited}
\label{sec:condfree}

Let $\varphi$, $\psi$ be two states on the non-commutative probability space $A$. We denote by $\Phi, \Psi$ their extensions to elements of $G$, and by $\beta,\beta' \in g$ and $\kappa,\kappa' \in g$ the corresponding boolean respectively free infinitesimal cumulant characters, i.e., $\Phi=\mathcal{E}_\succ(\beta)=\mathcal{E}_\prec(\kappa)$ and $\Psi=\mathcal{E}_\succ(\beta')=\mathcal{E}_\prec(\kappa')$. Recall the relation between boolean and free infinitesimal cumulant characters 
\begin{equation}
\label{freebool}
	\beta = \Phi^{*-1} \succ \kappa \prec \Phi \qquad \ \beta' = \Psi^{*-1} \succ \kappa' \prec \Psi.
\end{equation}
In the following we would like to determine the infinitesimal cumulant character $R \in g$ such that
\begin{equation}
\label{freeCond1}
	\beta = R^{\kappa'} = \Psi^{*-1} \succ R \prec \Psi,
\end{equation}
and show that it is related to the c-free cumulants $R_n^{(\varphi,\psi)}$ as the infinitesimal characters $\beta$, $\beta'$ and $\kappa$, $\kappa'$ are related to the corresponding boolean and free cumulants.

Lemma \ref{lem:adjoint} implies immediately that 
\allowdisplaybreaks
\begin{align*}
	 \beta(w)
	 	&=\sum_{1,n \in S \subseteq [n]} R(a_S)\Psi(a_{J_{[n]}^S}) \\
	 	&=\sum_{\pi \in NC^{irr}_n} \prod_{\mathrm{outer} \atop  \pi_1 \in \pi}
		R(a_{\pi_1})
		\prod_{\mathrm{inner} \atop  \pi_i \in \pi} 
		{\kappa'}(a_{\pi_i}).
\end{align*}
Inverting \eqref{freeCond1} gives 
\begin{equation}
\label{freeCond2}  
	R = \Psi \succ \beta \prec \Psi^{*-1},
\end{equation}
which evaluates to
\begin{align*}
	R(w)
		&=\sum_{1,n \in S \subseteq [n]} \beta(a_S)\Psi^{*-1} (a_{J_{[n]}^S})\\
	 	&=\sum_{\pi \in NC^{irr}_n} (-1)^{|\pi|-1} \prod_{\mathrm{outer} \atop  \pi_1 \in \pi} 
		{\beta}(a_{\pi_1})
		\prod_{\mathrm{inner} \atop  \pi_i \in \pi} 
		{\beta'}(a_{\pi_i}). 
\end{align*}
As $\beta=\mathcal{L}_\succ(\Phi)=\Phi^{*-1} \succ (\Phi-e)$, we can express $R$ in terms of the characters $\Phi$ and $\Psi$
\begin{equation}
\label{freeCond3}  
	R =  \Psi \succ \big(\Phi^{*-1} \succ (\Phi-e)\big) \prec \Psi^{*-1}.
\end{equation}
For instance, with the following notation for moments 
$$
	m^{\varphi}_n(a_1,\ldots,a_n)=\Phi(w),\ m^{\psi}_n(a_1,\ldots,a_n)=\Psi(w),
$$ 
and using Lemma \ref{lem:adjoint} together with some shuffle algebra we find quickly that 
\begin{align*}
	\lefteqn{R (a_1a_2a_3)
	= (\Phi^{*-1} \succ (\Phi-e))(a_1a_2a_3) 
		+ (\Phi^{*-1} \succ (\Phi-e))(a_1a_3)\Phi^{*-1}(a_2)}\\ 
		     &= m^{\varphi}_3(a_1a_2a_3) 
		     - m^{\varphi}_1(a_3)m^{\varphi}_2(a_1a_2) 
		     - m^{\varphi}_2(a_2a_3)m^{\varphi}_1(a_1)
		     +m^{\varphi}_1(a_1)m^{\varphi}_1(a_2)m^{\varphi}_1(a_3) \\
		     &\quad - m^{\varphi}_2(a_1a_3)m^{\psi}_1(a_2)
		     +m^{\varphi}_1(a_1)m^{\varphi}_1(a_3)m^{\psi}_1(a_2).
\end{align*}
A closer look at \eqref{freeCond2} respectively \eqref{freeCond3} reveals that from the relation between boolean and free cumulants, expressed on the level of infinitesimal characters by \eqref{freebool}, it follows that
\begin{equation}
\label{freeCond4}  
	R =  (\Phi * \Psi^{*-1})^{*-1} \succ \big((\Phi-e) \prec \Phi^{*-1}\big) \prec (\Phi*\Psi^{*-1}),
\end{equation}
where $\kappa=\mathcal{L}_\prec(\Phi)=(\Phi-e) \prec \Phi^{*-1} \in g$. This yields
\begin{equation}
\label{freeCond5}  
	R =  (\Phi * \Psi^{*-1})^{*-1} \succ \kappa \prec (\Phi*\Psi ^{*-1}).
\end{equation}

In terms of half-shuffle exponentials this gives
\begin{align}
	\Phi &= \mathcal{E}_\succ\big(R^{\kappa'}\big) \\ 
		   &=\mathcal{E}_\succ\big(\Psi^{*-1} \succ R \prec \Psi \big) \label{freeCond6}\\
	           &= \mathcal{E}_\prec\big((\Phi * \Psi^{*-1}) \succ R  \prec (\Phi*\Psi ^{*-1})^{*-1}\big). \label{freeCond7}
\end{align}	 
Observe the change from left half-shuffle exponential to right half-shuffle exponential between equations \eqref{freeCond6} and \eqref{freeCond7}. These half-shuffle exponentials solve the corresponding fixed point equations
\begin{equation}
\label{freeCond8}
	\Phi  = e + \Phi  \succ \big(\Psi^{*-1} \succ R \prec \Psi \big)
\end{equation}
respectively 
\begin{equation}
\label{freeCond9}
	\Phi  = e + \big((\Phi * \Psi^{*-1}) \succ R \prec (\Phi*\Psi ^{*-1})^{*-1}\big)\prec \Phi .
\end{equation}
Again, \eqref{freeCond8} reflects the boolean character of the picture, whereas \eqref{freeCond9} is in the free setting. 

\begin{rmk}{\rm{Observe that for $\Psi=\un$, the half-shuffle fixed point equation \eqref{freeCond8} reduces to $\Phi = e + \Phi \succ R$, which implies that $R=\Phi^{*-1} \succ \kappa \prec \Phi=\beta$. For $\Phi=\Psi$ we deduce that $\Phi = e + R \prec \Phi$, such that $R=\kappa$ is the free infinitesimal cumulant character.
}}\end{rmk}

\begin{prop}\label{prop:Speicher}
For a word $w=a_1 \cdots a_n \in T_+(A)$ of length $n$
\begin{align}
\label{freeCond71}
	\Phi(w)=m^{\varphi}_n(a_1,\ldots,a_n)
	&=\sum_{j=1}^n\sum_{1,j\in S \subseteq [j]} R(a_S) \Psi (a_{J_{[j]}^S}) \Phi(a_{j+1}\dots a_n)\\
	&=\sum_{\pi \in NC_n} \prod_{\mathrm{outer} \atop  \pi_j \in \pi} 
	R(a_{\pi_j})
	\prod_{\mathrm{inner} \atop  \pi_i \in \pi} 
	{\kappa'}(a_{\pi_i}).  \label{freeCond72}
\end{align}
\end{prop}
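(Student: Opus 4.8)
The plan is to read \eqref{freeCond71} off the right half-shuffle fixed point equation and then pass to \eqref{freeCond72} by a first-block induction on non-crossing partitions.

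First I would start from \eqref{freeCond6}, which asserts $\Phi=\mathcal{E}_\succ(R^{\kappa'})$ with $R^{\kappa'}=\Psi^{*-1}\succ R\prec\Psi$, so that $\Phi$ obeys the fixed point equation \eqref{freeCond8}, namely $\Phi=e+\Phi\succ R^{\kappa'}$. Writing $\Phi=e+(\Phi-e)$ and using the convention $e\succ R^{\kappa'}=R^{\kappa'}$ together with $(\Phi-e)\succ R^{\kappa'}=m_\mathbb{K}((\Phi-e)\otimes R^{\kappa'})\Delta^+_\succ$, I would evaluate on $w=a_1\cdots a_n$. Because $R^{\kappa'}$ is an infinitesimal character it annihilates every proper bar-product, so in $\Delta^+_\succ(w)=\sum_{\emptyset\ne S,\,1\notin S}a_S\otimes a_{J^S_{[n]}}$ only those $S$ survive for which $[n]-S$ is a single interval; since $1\in[n]-S$ this forces $[n]-S=\{1,\dots,j\}$ and $S=\{j+1,\dots,n\}$. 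Collecting the surviving terms (the $e\succ R^{\kappa'}$ contribution supplies the missing $j=n$ term) yields the linear recursion $\Phi(w)=\sum_{j=1}^n R^{\kappa'}(a_1\cdots a_j)\,\Phi(a_{j+1}\cdots a_n)$. Substituting the closed form of $R^{\kappa'}$ from Lemma \ref{lem:adjoint}, equation \eqref{one}, namely $R^{\kappa'}(a_1\cdots a_j)=\sum_{1,j\in S\subseteq[j]}R(a_S)\Psi(a_{J^S_{[j]}})$, gives exactly \eqref{freeCond71}.

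For \eqref{freeCond72} I would argue by induction on $n$, the length one case reducing to $R(a_1)=\Phi(a_1)$. The combinatorial input is the first-block decomposition of a partition $\pi\in NC_n$: if $j$ denotes the largest element of the block containing $1$, then $\{1,\dots,j\}$ is a union of blocks of $\pi$, the restriction $\pi'=\pi|_{[j]}$ lies in $NC^{irr}_j$ (its outer block contains both $1$ and $j$), and $\pi''=\pi|_{\{j+1,\dots,n\}}$ is an arbitrary element of $NC(\{j+1,\dots,n\})$; conversely every such pair $(\pi',\pi'')$ reassembles to a unique $\pi\in NC_n$ with $j=\max(\text{block of }1)$. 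Under this bijection the outer block of $\pi'$ and the outer blocks of $\pi''$ are precisely the outer blocks of $\pi$, while the inner blocks of $\pi'$ and of $\pi''$ are precisely the inner blocks of $\pi$. Using Lemma \ref{lem:adjoint} in the form \eqref{one1} to identify $R^{\kappa'}(a_1\cdots a_j)$ with the sum over $\pi'\in NC^{irr}_j$ weighting the outer block by $R$ and the inner blocks by $\kappa'$, and the induction hypothesis to expand $\Phi(a_{j+1}\cdots a_n)$ over $\pi''$, the recursion \eqref{freeCond71} reorganises into the sum over all $\pi\in NC_n$ in \eqref{freeCond72}.

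The part requiring care — and the main obstacle — is the verification that the inner/outer labelling is preserved by the decomposition: I would check that no block of $\pi$ straddles $j$ (so $\{1,\dots,j\}$ is genuinely a union of blocks), that a block $C\subseteq\{j+1,\dots,n\}$ is inner in $\pi$ if and only if it is inner in $\pi''$ (a nesting witness for $C$ cannot lie in $[j]$, since its larger endpoint would have to exceed $\max C>j$), and that the blocks nested under the block of $1$ inside $[j]$ are exactly the inner blocks of the irreducible piece $\pi'$. Once this label-compatibility is established, the two sums match term by term and the proof closes.
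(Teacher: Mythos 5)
Your proposal is correct and follows essentially the same route as the paper: read the recursion $\Phi(w)=\sum_{j}R^{\kappa'}(a_1\cdots a_j)\,\Phi(a_{j+1}\cdots a_n)$ off the fixed point equation \eqref{freeCond8} using that the infinitesimal character $R^{\kappa'}$ kills bar-products, substitute Lemma \ref{lem:adjoint} to get \eqref{freeCond71}, and iterate to reach \eqref{freeCond72}. The only difference is that you spell out the ``by iteration'' step of the paper as an explicit induction via the first-block decomposition $\pi\mapsto(\pi',\pi'')$ with the inner/outer label check, which is a welcome but not divergent elaboration.
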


\begin{proof}
From \eqref{freeCond8} and Lemma \ref{lem:adjoint} it follows for a word $w=a_1 \cdots a_n \in T_+(A)$ that
\begin{align}
\label{freeConda}
	\lefteqn{\Phi(w)
	= \big(\Phi \succ \big(\Psi^{*-1} \succ R \prec \Psi \big)\big)(a_1 \cdots a_n)}\\
	&= \sum_{k=2}^{n} \Phi(a_{k}  \cdots a_{n})\big(\Psi^{*-1} \succ R \prec \Psi \big)(a_1 \cdots a_{k-1})\\
	&= \sum_{k=2}^{n} \Big(
	\sum_{1,k-1 \in S \subseteq [k-1]}R(a_S) \Psi (a_{J_{[k-1]}^S}) \Big) \Phi(a_{k}  \cdots a_{n})\\
	&= \sum_{k=2}^{n} 
	\bigg(\sum_{\pi \in NC^{irr}_{k-1}} \prod_{\mathrm{outer}\atop  \pi_1 \in \pi} 
	R(a_{\pi_1})
	\prod_{\mathrm{inner}\atop  \pi_i \in \pi} {k'}_{|\pi_i|}(a_{\pi_i}) \bigg)\Phi(a_{k}  \cdots a_{n}),
\end{align}
which gives by iteration the expression in  \eqref{freeCond72}. \end{proof}

\begin{cor}
The infinitesimal cumulant character $R \in g$ defined in \eqref{freeCond1} computes the multivariate c-free cumulant:
$$
	R(a_1 \cdots a_n)=R^{(\varphi,\psi)}(a_1,\dots,a_n),
$$
using the notation of Section \ref{sect:condit}.
\end{cor}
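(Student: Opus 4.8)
The plan is to read off the conclusion by comparing the moment expansion established in Proposition~\ref{prop:Speicher} with the defining relation~\eqref{cfreecum} for the conditionally free cumulants, and then to pin down the outer-block coefficients by an induction on the length of the word. First I would observe that the inner-block coefficients $\kappa'(a_{\pi_i})$ appearing in~\eqref{freeCond72} are precisely the (multivariate) free cumulants of the state $\psi$. Indeed, $\kappa'$ is the free infinitesimal cumulant character of $\Psi=\mathcal{E}_\prec(\kappa')$, so by the free moment--cumulant relation~\eqref{free} the functional $\kappa'$ evaluated on $a_{\pi_i}$ computes exactly the free cumulant $k^\psi_{|\pi_i|}(a_{\pi_i})$ of $\psi$. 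Thus the inner weights in~\eqref{freeCond72} agree term by term with the inner weights $k^\psi_{|\pi_l|}$ in Speicher's relation~\eqref{cfreecum}, once the latter is extended multilinearly to the multivariate setting in the obvious way.

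With the inner coefficients identified, both~\eqref{freeCond72} and the multivariate form of~\eqref{cfreecum} express the same quantity $\Phi(w)=m^\varphi_n(a_1,\dots,a_n)$ as a sum over $NC_n$, with identical inner weights and with the outer blocks carrying $R(a_{\pi_j})$ in the first case and $R^{(\varphi,\psi)}_{|\pi_j|}(a_{\pi_j})$ in the second. It therefore suffices to prove that these two families of outer coefficients coincide, which I would do by induction on $n=|w|$. The base case $n=1$ is immediate: the only partition of $[1]$ is the single outer block, so both relations reduce to $R(a_1)=m^\varphi_1(a_1)=R^{(\varphi,\psi)}_1(a_1)$.

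For the inductive step I would isolate in each sum the single-block partition $\pi=\{[n]\}$, whose unique block is outer and contributes $R(a_1\cdots a_n)$ respectively $R^{(\varphi,\psi)}(a_1,\dots,a_n)$. Every other $\pi\in NC_n$ has at least two blocks, so all of its blocks have length strictly less than $n$; for such blocks the outer coefficients agree by the induction hypothesis and the inner coefficients agree by the free-cumulant identification of the previous paragraph. Hence the two expansions coincide term by term outside the single-block partition. Since both expansions equal $\Phi(w)$, subtracting them forces the single-block terms to be equal as well, giving $R(a_1\cdots a_n)=R^{(\varphi,\psi)}(a_1,\dots,a_n)$ and completing the induction.

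The genuinely new input is entirely contained in Proposition~\ref{prop:Speicher} (which itself rests on Lemma~\ref{lem:adjoint} and the fixed-point equation~\eqref{freeCond8}); the remaining argument is a uniqueness statement for the c-free moment--cumulant relation. Accordingly, the only point requiring care is the bookkeeping: one must check that the outer/inner structure used in~\eqref{freeCond72} matches exactly the outer/inner dichotomy of Section~\ref{sect:condit}, and that the multivariate extension of~\eqref{cfreecum} is the one realised by the non-crossing-partition sum in~\eqref{freeCond72}. Once the inner weights are recognised as free cumulants of $\psi$, the outer block of the full word occurs in no partition other than $\{[n]\}$, so the induction closes without any further computation.
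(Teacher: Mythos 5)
Your proof is correct and follows essentially the same route as the paper: the paper's proof is the one-line observation that \eqref{freeCond71}--\eqref{freeCond72} identify with the multivariate generalisation of the defining relation \eqref{cfreecum}, and your argument simply makes explicit the uniqueness step this relies on (the triangular structure of the implicit definition, resolved by isolating the single-block partition and inducting on word length). The identification of the inner weights $\kappa'(a_{\pi_i})$ with the free cumulants $k^\psi_{|\pi_i|}$ is likewise exactly what the paper intends, so your write-up is a faithful, slightly more detailed version of the published proof.
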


\begin{proof}
Equation \eqref{freeCond71} and \eqref{freeCond72} identify indeed with the multivariate generalization of the equations defining c-free cumulants (see Section \ref{sect:condit} and reference \cite{Bozejko96}). \end{proof}

Equation \eqref{freeCond3} then says that c-free cumulants are given by an shuffle adjoint action on the boolean logarithm ($R$-transformation) 
$$
	R=  \Psi \succ \mathcal{L}_\succ(\Phi) \prec \Psi^{*-1}.
$$


\section{Conditionally free convolution}
\label{sect:cfreeconv}

Recall from Section \ref{sect:condit} that the c-free convolution $(\varphi,\psi)$ of two c-free states $(\varphi_1,\psi_1)$ and $(\varphi_2,\psi_2)$, is given on the associated free and c-free cumulants by
$$
	k_n^\psi=k_n^{\psi_1}+k_n^{\psi_2},
$$
$$
	R_n^{(\varphi,\psi)}=R_n^{(\varphi_1,\psi_1)} + R_n^{(\varphi_2,\psi_2)}.
$$
Let us write $\Phi,\Psi$ (respectively $\Phi_1,\Psi_1$, $\Phi_2,\Psi_2$) for the associated characters, respectively $\kappa,\beta,\kappa',\beta'$ (and so on) for the associated free and boolean infinitesimal cumulant characters.

In shuffle group theoretical terms, the c-free convolution of  $\Phi_1 =\mathcal{E}_\succ\big(R_1^{\kappa_1'}\big) = \mathcal{E}_\succ\big(\Psi^{*-1}_1 \succ R_1 \prec \Psi_1\big)$ and  $\Phi_2 =\mathcal{E}_\succ\big(R_2^{\kappa_2'}\big) = \mathcal{E}_\succ\big(\Psi^{*-1}_2 \succ R_2 \prec \Psi_2\big)$
is defined by the resulting state
$$
	\Phi = \mathcal{E}_\succ\big(R^{\kappa'}\big) 
			  =  \mathcal{E}_\succ\big(\Psi^{*-1} \succ R \prec \Psi\big),
$$ 
where
\begin{equation}
\label{Cond-1}
	\kappa'= \kappa_1' + \kappa'_2
\end{equation}
and
\begin{equation}
\label{Cond-2}
	R=R_1 + R_2.
\end{equation}
Condition \eqref{Cond-1} implies that $\Psi=\mathcal{E}_\prec( \kappa_1' + \kappa'_2)$, that is
$$
	\Psi = \mathcal{E}_\prec( \kappa_1') *  \mathcal{E}_\prec({\kappa'_2}^{\kappa_1'}),
$$
which is free additive convolution of $\Psi_1$ with $\Psi_2$, i.e., $\Psi =\Psi_1 \boxplus \Psi_2$. The second condition \eqref{Cond-2} is more involved and shows that c-free convolution is different from free additive convolution in general.  

We shall consider three particular cases to illustrate how shuffle group calculus can be developped. First, we assume that  $\kappa_1' = \kappa'_2=0$, i.e., $\Psi_1$ and $\Psi_2$ are just the shuffle unit, and $\kappa'=0$. In light of \eqref{Cond-2} c-free convolution turns out to be just boolean additive convolution
$$
	\Phi = \mathcal{E}_\succ\big(R_1 + R_2\big)
		=\mathcal{E}_\succ\big(R_1^{-R_2}\big) * \mathcal{E}_\succ\big(R_2\big),
$$
that is, $\Phi = \Phi_1 \uplus \Phi_2$.

Next we consider the case when $\kappa_1 =  \kappa_1'$ and $\kappa_2 =  \kappa'_2$. This means that $\Phi_1=\Psi_1$ and $\Phi_2=\Psi_2$. Then $\Phi_1=\mathcal{E}_\succ\big(\Phi^{*-1}_1 \succ R_1 \prec \Phi_1\big)=\mathcal{E}_\prec(R_1)$ and $\Phi_2=\mathcal{E}_\succ\big(\Phi_2^{*-1} \succ R_2 \prec \Phi_2\big)
=\mathcal{E}_\prec(R_2)$. Now, conditions \eqref{Cond-1} and \eqref{Cond-2} imply that the c-free convolution coincides with the free additive convolution such that
$$
	\Phi = \mathcal{E}_\prec(R_1 + R_1)
	=\mathcal{E}_\succ\big(\Phi^{*-1} \succ (R_1+R_2) \prec \Phi\big),
$$ 
and $\Phi=\Psi$.

Last we consider the case, where $\kappa_1'=0$ and $\kappa_2=\kappa'_2$ corresponding to $\Psi_1=e$ and $\Phi_2=\Psi_2$. We will show that in this case c-free convolution coincides with the convolution, i.e., shuffle product in $H^*$. According to \cite{EFP17}, this amounts to saying that in this case c-free convolution coincides with monotone convolution. First notice that $\Psi=\Psi_2=\Phi_2=\mathcal{E}_\prec (\kappa_2)=\mathcal{E}_\prec (\kappa_2')$, whereas $\Psi_1=e$ implies $\beta_1=R_1$ (so that $\Phi_1=\mathcal{E}_\succ (R_1)$). Similarly, $\Phi_2=\Psi_2$ implies $R_2=\kappa_2$ and $\Phi_2=\mathcal{E}_\prec (R_2)$.
We calculate
\begin{align*}
	\Phi_1\ast\Phi_2&=\mathcal{E}_\succ\big(R_1\big) * \mathcal{E}_\prec\big(R_2\big) 
	= \mathcal{E}_\succ\big(R_1\big) * \mathcal{E}_\succ\big(\Psi^{*-1}_2 \succ R_2 \prec \Psi_2\big)\\
	&=\exp^*\!\big(-\Omega'(-R_1)\big) * \exp^*\!\big(-\Omega'(\Psi^{*-1}_2 \succ (-R_2) \prec \Psi_2)\big)\\
	&=\exp^*\!\big(\mathrm{BCH}\big(-\Omega'(-R_1), - \Omega'(\Psi^{*-1}_2 \succ (-R_2) \prec \Psi_2)\big)\big)\\
	&=\exp^*\!\big(-\mathrm{BCH}\big(\Omega'(\Psi^{*-1}_2 \succ (-R_2) \prec \Psi_2),\Omega'(-R_1)\big)\big)\\
	&=\exp^*\!\big(-\Omega'\big((\Psi^{*-1}_2 \succ (-R_2) \prec \Psi_2)  \# (-R_1)\big)\big).
\end{align*} 
Using $a \# b = a + \mathrm{e}^{L_{\Omega'(a) \rhd}}b$ we calculate the product
$$
	(\Psi^{*-1}_2 \succ (-R_2) \prec \Psi_2)  \# (-R_1)
	= - \Psi^{*-1}_2 \succ R_2 \prec \Psi_2 - \mathrm{e}^{L_{\Omega'(\Psi^{*-1}_2 \succ (-R_2) \prec \Psi_2 ) \rhd}}R_1,
$$
which requires to calculate $ \mathrm{e}^{L_{\Omega'(\Psi^{*-1}_2 \succ (-R_2) \prec \Psi_2 ) \rhd}}R_1$
\begin{align*}
	\lefteqn{ \mathrm{e}^{L_{\Omega'(\Psi^{*-1}_2 \succ (-R_2) \prec \Psi_2 ) \rhd}}R_1
	 = \exp^*\!\big(\Omega'(\Psi^{*-1}_2 \succ (-R_2) \prec \Psi_2)\big) \succ
	 R_1 \prec \exp^*\!\big(-\Omega'(\Psi^{*-1}_2 \succ (-R_2) \prec \Psi_2) \big)}\\
	 &= \big(\exp^*\!\big(-\Omega'(\Psi^{*-1}_2 \succ (-R_2) \prec \Psi_2)\big)\big)^{*-1} \succ
	 R_1 \prec \exp^*\!\big(-\Omega'(\Psi^{*-1}_2 \succ (-R_2) \prec \Psi_2) \big)\\
	 &= \mathcal{E}^{*-1}_\succ\big(\Psi^{*-1}_2 \succ R_2 \prec \Psi_2\big) \succ
	 R_1 \prec \mathcal{E}_\succ\big(\Psi^{*-1}_2 \succ R_2 \prec \Psi_2\big)\\ 
	 &= \Psi_2^{*-1} \succ R_1 \prec \Psi_2,
\end{align*} 
since $\Psi_2=\Phi_2=\mathcal{E}_\prec(R_2)=\mathcal{E}_\succ(\Psi^{*-1}_2 \succ R_2 \prec \Psi_2)$.
Hence, this implies that 
\begin{align*}
	\big(\Psi^{*-1}_2 \succ (-R_2) \prec \Psi_2\big)  \# (-R_1)
	&= - \Psi^{*-1}_2 \succ R_2 \prec \Psi_2 - \Psi_2^{*-1} \succ R_1 \prec \Psi_2\\
	&= - \Psi^{*-1}_2 \succ (R_2 + R_1) \prec \Psi_2.
\end{align*} 
This then yields
\begin{align*}
	\mathcal{E}_\succ\big(R_1\big) * \mathcal{E}_\prec\big(R_2\big) 
	&=\exp^*\!\big(-\Omega'\big((\Psi^{*-1}_2 \succ (-R_2) \prec \Psi_2)  \# (-R_1)\big)\big)\\
	&=\exp^*\!\big(-\Omega'\big(- \Psi^{*-1}_2 \succ (R_2 + R_1) \prec \Psi_2\big)\big)\\
	&= \mathcal{E}_\succ(\Psi^{*-1}_2 \succ (R_2 + R_1) \prec \Psi_2).
\end{align*} 
Hence, c-free convolution identifies in that case with the shuffle convolution product. We get  $\Psi=\Psi_1 \boxplus\Psi_2=\Psi_2$ as $\Psi_1=e$ and 
$$
	\Phi= \mathcal{E}_\succ(\Psi^{*-1} \succ (R_1 + R_2) \prec \Psi)=\Phi_1\ast\Phi_2.
$$

\vspace{1cm}

\end{document}